\providecommand{\U}[1]{\protect\rule{.1in}{.1in}}
\newtheorem{theorem}{Theorem}[section]
\newtheorem{corollary}[theorem]{Corollary}
\newtheorem{definition}[theorem]{Definition}
\newtheorem{example}[theorem]{Example}
\newtheorem{lemma}[theorem]{Lemma}
\newtheorem{proposition}[theorem]{Proposition}
\newtheorem{remark}[theorem]{Remark}
\newenvironment{proof}[1][Proof]{\textbf{#1.} }{\ \rule{0.5em}{0.5em}}
\newcommand{\dive}{{\rm div}}
\newcommand{\Hund}{{\rm \text{\scriptsize{H-}}und}}
\newcommand{\Hhund}{{\rm \text{\scriptsize{H-}}hund}}
\newcommand{\Hscap}{{\rm \text{\scriptsize{H-}}cap}}
\newcommand{\Graph}{{\rm Graph}}
\newcommand{\rr}{\mathbb{R}}
\newcommand{\HH}{\mathbb{H}^2}
\newcommand{\hhr}{\mathbb{H}^2\times \mathbb{R}}
\newcommand{\hh}{\mathbb{H}^2}
\newcommand{\brg}{\partial_{g}}
\newcommand{\brx}{\partial_{\times}}
\newcommand{\HM}{0\leqslant H\leqslant1/2}
\newcommand{\dern}{\frac{2H}{\sqrt{1-4H^2}}}
\title{Geodesic boundary of constant mean curvature surfaces in $\mathbb{H}^2 \times \mathbb{R}$}
\author{Felix Nieto Cacais\footnote{Partially supported by CNPq, 141269/2019-7}, Miriam Telichevesky\footnote{Partially supported by CNPq, 304271/2016-0}}
\begin{document}

	\maketitle

\begin{abstract}
    It is provided a classification of curves in the geodesic boundary $\brg(\hhr)$ that are the boundary of properly embedded CMC-$H$ surfaces, $0\le H<1/2$ of the product manifold $\hhr$. It is also defined a notion of ``asymptotic height'' of a class of unbounded functions defined in $\hh$ whose graphs have CMC $0<H<1/2$, giving sense for Dirichlet problems at infinity for those graphs and proving that they are solvable under usual assumptions.
\end{abstract}

	
	\section{Introduction}

	In this paper we study curves in the geodesic boundary of $\hh\times\mathbb{R}$ which are the geodesic boundary of properly embedded surfaces in $\hhr$ with constant mean curvature between $0$ and $1/2$. The \textit{geodesic boundary}, also know as the \textit{asymptotic boundary}, of a Hadamard manifold $M$ is the set $\brg M$ of equivalence classes of geodesic rays that stay at a finite distance. The set $\overline{M}:=M\cup (\brg M)$ is the so-called \textit{geodesic compactification} of $M$; if endowed with the cone topology, it is a compact topological space. If you are not familiar with this notion, see Section \ref{sec:geodesic_compactification} for more details.

Maybe one of the most interesting Hadamard manifolds where it is very nice to work with the geodesic compactification is the hyperbolic plane $\hh$. In particular, it makes sense to prescribe some continuous function $\Psi:\brg \hh \to \mathbb{R}$ and ask if there is any solution for some Dirichlet problem (for Laplace's equation or for minimal graph equation, for instance, being the second one our main interest) that atains $\Psi$ at infinity.

In this direction, we metion the very important result of B. Nelli and H. Rosenberg \cite{NR}:

\begin{theorem}[Theorem 4 of \cite{NR}]\label{theo:theoNR}
 Let $\Gamma$ be a continuous Jordan curve in $(\brg \hh)\times \rr$, that is a vertical graph. Then, there exists a minimal vertical graph on $\hh$ having $\Gamma$ as asymptotic boundary. The graph is unique.
\end{theorem}

Also, in 2008 R. Sa Earp and E. Toubiana \cite{SET} built a family of curves on $\brg(\hh)\times\rr$ which are also boundary of minimal surfaces:
 \bigskip

 \begin{theorem}[Proposition 2.1 of \cite{SET}]\label{theo:propSET}
     Let be $a,b\in\overline{\rr}$, $\widehat{q_1,q_2}$ equatorial arc in $\brg\HH$ and $\sigma$ the boundary of the rectangle $\widehat{q_1,q_2}\times[a,b]$ with $q_1\neq q_2$, $b-a\geqslant\pi$.  Then there is a properly embedded minimal surface $\Sigma$ such that its geodesic boundary is $\sigma$.
 \end{theorem}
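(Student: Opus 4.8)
The plan is to realize $\Sigma$ as a limit of solutions of compact Plateau problems, confining the approximating surfaces between totally geodesic and explicit barriers, and to read off the constant $\pi$ from a distinguished parabolic minimal surface. First I would fix the geometry: let $\gamma\subset\hh$ be the complete geodesic with endpoints $q_1,q_2$, let $\Omega\subset\hh$ be the ideal half-disk bounded by $\gamma$ whose asymptotic boundary is the arc $\widehat{q_1,q_2}$, and set $m=(a+b)/2$. I expect $\Sigma$ to be invariant under the reflection $(p,z)\mapsto(p,2m-z)$ across the slice $\hh\times\{m\}$, to project onto $\overline{\Omega}$, and to be a bigraph over $\Omega$ meeting $\hh\times\{m\}$ orthogonally along $\gamma\times\{m\}$. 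Thus it suffices to produce the upper half $\Sigma^{+}=\Sigma\cap\{z\geqslant m\}$, a minimal graph over $\Omega$ with vertical contact (free boundary) along $\gamma\times\{m\}$ and asymptotic boundary $\widehat{q_1,q_2}\times\{b\}\cup(\{q_1\}\cup\{q_2\})\times[m,b]$; reflecting then yields $\Sigma$. When $a=-\infty$ or $b=+\infty$ the slab degenerates to a half-space or to $\hhr$, and one runs the same scheme with the corresponding barriers.

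The barriers I would use are the horizontal slices $\hh\times\{a\}$, $\hh\times\{b\}$ and the vertical plane $\gamma\times\rr$, all totally geodesic, hence minimal, which by the maximum principle confine the approximating surfaces to the slab $a\leqslant z\leqslant b$ and to the solid region over $\overline{\Omega}$. The decisive barriers are the parabolic (horocyclic) minimal surfaces: in the upper half-plane model a surface invariant under a parabolic subgroup and of the form $z=u(y)$ satisfies a first integral $u'/\sqrt{1+y^{2}u'^{2}}=c$, whence $u(y)=\arcsin(cy)$ up to a constant. This graph climbs exactly $\pi/2$ before acquiring a vertical tangent, so a complete arch spans height exactly $\pi$ and is asymptotic to the full circle at two levels differing by $\pi$, together with a single vertical segment of height $\pi$ at the fixed point of the subgroup. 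Placing these surfaces with ideal fixed point $q_1$ (resp. $q_2$) furnishes the tools that force the approximating surfaces to ``turn over'' from the bottom arc to the top arc and that certify $b-a\geqslant\pi$ as exactly the height needed for this turning.

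With these in hand I would carry out an exhaustion. Choose compact Jordan curves $\sigma_n\subset\hhr$ converging to $\sigma$, taken invariant under the reflection across $\hh\times\{m\}$, and solve the Plateau problem for each $\sigma_n$ to obtain area-minimizing, hence stable, minimal disks $\Sigma_n$; since $\sigma_n$ lies on the boundary of the mean-convex solid cut out by the barrier planes, $\Sigma_n$ is embedded by the Meeks--Yau theorem, and it may be taken symmetric, so it meets $\hh\times\{m\}$ orthogonally. Stability yields uniform interior curvature estimates, so a subsequence converges smoothly on compact sets to a properly embedded minimal surface $\Sigma_\infty$; the barriers keep $\Sigma_\infty$ inside the slab over $\overline{\Omega}$ and prevent it from collapsing onto the slice $\hh\times\{m\}$. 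I would then identify $\partial_\infty\Sigma_\infty=\sigma$ by sandwiching: the horizontal and vertical planes bound the asymptotic boundary from outside, while the height-$\pi$ parabolic surfaces, inserted near the two horizontal arcs and the two vertical segments, force $\Sigma_\infty$ to attain levels arbitrarily close to $a$ and to $b$ over $\widehat{q_1,q_2}$ and to contain the full segments $(\{q_1\}\cup\{q_2\})\times[a,b]$.

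The main obstacle I anticipate is precisely this last step: controlling the asymptotic boundary of the limit, that is, showing that $\Sigma_\infty$ neither detaches from the cylinder at infinity before reaching the levels $a,b$ nor overshoots them, and that it does not degenerate. This is where the hypothesis $b-a\geqslant\pi$ is indispensable, because the height of the parabolic barrier is exactly $\pi$: only when the prescribed height is at least $\pi$ can such a barrier be inserted to make the surface complete the arch between the two arcs, and for $b-a<\pi$ the same comparison obstructs the existence of any admissible $\Sigma$. Making the barrier placement rigorous, together with verifying embeddedness, properness, and non-degeneracy of $\Sigma_\infty$ in the limit, is the technical heart of the argument.
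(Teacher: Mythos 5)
First, a point of context: the paper does not prove this statement at all --- it is quoted from Sa Earp--Toubiana \cite{SET} as motivation --- so your proposal can only be compared with the proof there and with the closely related constructions of Section \ref{sec:invariant_surfaces}. Your computation locating the constant $\pi$ is correct: the first integral $u'/\sqrt{1+y^2u'^2}=c$ and the solution $\arcsin(cy)$ describe exactly the $H=0$ horonduloid $\Hhoro$ of Proposition \ref{p2}, whose complete arch has height $\pi$ and asymptotic boundary $\brg\hh\times\{t,t+\pi\}\cup\{p\}\times[t,t+\pi]$. But the architecture of your argument is not the one that works, and the actual proof is far more elementary: it is by explicit example. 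The minimal surfaces invariant under the hyperbolic translations along the geodesic $\gamma$ joining $q_1$ to $q_2$ --- precisely the $H=0$, $C>1$ case of Proposition \ref{HiperC} and of the surfaces $\Hhund$ of Proposition \ref{barreirastall}, read in the product compactification rather than the geodesic one --- are properly embedded bigraphs over the exterior of an equidistant curve whose asymptotic boundary is exactly $\partial\bigl(\widehat{q_1,q_2}\times[-h(C),h(C)]\bigr)$, where
\begin{equation*}
2h(C)=2\int_0^{\pi/2}\frac{C\,d\theta}{\sqrt{\sin^2\theta+C^2-1}}
\end{equation*}
decreases strictly from $+\infty$ to $\pi$ as $C$ runs over $(1,+\infty)$. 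Hence every height $b-a>\pi$ is realized by one member of the family after a vertical translation, the case $b-a=\pi$ is obtained as a limit, and the cases with $a$ or $b$ infinite are the surfaces $C=\pm1$ of Proposition \ref{ExemplosNovos} (or the cylinder $\gamma\times\rr$). No Plateau problem, curvature estimate, or Meeks--Yau theorem is needed.

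The genuine gap in your scheme is the step you yourself flag as the technical heart: showing $\brg\Sigma_\infty\supseteq\sigma$. The parabolic arches you designate as the ``decisive barriers'' cannot be ``inserted near the two horizontal arcs'': their asymptotic boundary contains the \emph{entire} circle $\brg\hh$ at two levels differing by $\pi$, so whenever an arch overlaps the slab $a\le z\le b$ at least one of its two boundary circles lies at a height in $[a,b]$ and therefore crosses the vertical sides $\{q_1,q_2\}\times[a,b]$ of $\sigma$ (and, when $b-a=\pi$, it touches both horizontal edges). A comparison surface whose boundary at infinity meets $\sigma$ cannot be slid to a first interior tangency, so the maximum-principle mechanism you describe does not close, and nothing in the proposal prevents the limit of the Plateau solutions from being empty or from losing part of the top edge. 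The barriers that do separate points of $\brg(\hhr)\setminus\sigma$ from $\sigma$ are rectangles over proper subarcs, i.e.\ the hyperbolic-invariant family itself --- at which point that family already \emph{is} the desired surface. (Your closing claim that the same arch comparison obstructs existence for $b-a<\pi$ also does not follow as stated, though that direction is not part of the theorem.)
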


	Another result in this sense is given in the paper of B. Kloeckner and R. Mazzeo in \cite{KLM}:

\begin{theorem}[Theorem 4.1 of \cite{KLM}]
    Let $\sigma \subset \brg(\hh)\times\rr$ be  a  curve,  and  assume  that  every $p\in\brg(\hh)\times\rr\setminus \sigma$ can be separated from $\sigma$ 		by the boundary $\brg\Sigma_p$ of a properly immersed minimal surface $\Sigma_p$.Then $\sigma$ is minimally fillable.
\end{theorem}

	In the same paper, the authors give a new notion of boundary of minimal graphs, working with the geodesic boundary $\brg(\hhr)$ (which we will also kindly call ``the onion boundary'', see Figure \ref{fig:imagen1} bellow), what makes sense, since $\hhr$ is itself a Hadamard manifold. But in contrast of the product $(\brg \hh) \times \rr$, where different minimal surfaces could have different boundaries, all minimal surfaces in $\hhr$ mentioned in the previous theorems have the \textit{same} curve in $\brg(\hhr)$ as geodesic boundary. More precisely, if a minimal surface is contained between two slices $\hh\times\{0\}$ and $\hh\times\{L\}$, $L>0$, then its geodesic boundary in $\brg(\hhr)$ must be the so-called ``equator''. It is the case, for instance, of the minimal surfaces of Theorems \ref{theo:theoNR} and \ref{theo:propSET} above.
 
The Corollary 5.2 of B. Kloeckner and R. Mazzeo in \cite{KLM} provides a beautiful classification of possible curves (union of disjoint Jordan curves) $\sigma \in \brg(\hhr)$ that are \textit{minimally fillable}, that is, that are the geodesic boundary of some properly embedded minimal surface of $\hhr$. In the same paper they leave open the questions of classification of minimally fillable in $\brg(\hh)\times\rr$ or higher-dimensional symmetric spaces. Our work aims primarily to extend the classification of the result in another context: for properly embedded surfaces with constant mean curvature $H\in [0,1/2]$; in this sense we define $H-$fillable curves in $\brg(\hhr)$ (Definition \ref{def:Hfillablecurve})) and give the corresponding classification result in Corollary \ref{cor1}. We do not present it here because some notation must be first constructed. 
\

As one of the consequences of Corollary \ref{cor1}, we obtain that all CMC$-H$ entire graphs over $\hh$, $H\in [0,1/2]$ have their geodesic boundary in $\brg(\hhr)$ determined only by $H$ (up to orientation). We should point out that it does not happen with minimal graphs when considering $(\brg\hh)\times \mathbb{R}$, as in Theorem \ref{theo:theoNR} of \cite{NR} above, where, by uniqueness, different functions \textit{must have different boundary data}. The second great contribution of the present work is to obtain some kind of blow-up of $\brg(\hhr)$ at the geodesic boundary of CMC-$H$ entire (and well-behavied at infinity) graphs, for $H\in[0,1/2)$ previously fixed. It turns out that there is a natural place for prescribing Dirichlet boundary data for CMC-$H$ complete graphs with continuous normal vector field up to infinity, even knowing that those graphs correspond to functions that diverge at infinity. This new concept of boundary, defined and explored in Section \ref{sec:B_H}, allows us to distinguish different CMC-$H$ graphs that have the same geodesic boundary, extending the situation that occurs in the minimal case in $(\brg \hh)\times \rr$.

	\section{Geodesic compactification}\label{sec:geodesic_compactification}

	
	 Let be $M$ a Hadamard manifold. As in the work of Eberlein and O'Neil \cite{EO}, the geodesic rays are geodesics $\gamma:[0,+\infty]\rightarrow M$ with $|\gamma'(t)|=1$. Two geodesic rays $\gamma_1,\gamma_2$ are asymptotic if $\exists C\geqslant 0$ such that
	$$d(\gamma_1,\gamma_2)\leqslant C,\forall t\in[0,+\infty).$$

	The relation $\sim$ called asymptotic is an equivalence relation. The asymptotic boundary, or geodesic boundary (as we call in this work),	of $M$ is defined as 
	$$\brg M:=\{geodesic\,\,\,rays\}/\sim,$$ first introduced by Eberlein and O'Neil \cite{EO}.
	
	The equivalence class of a ray $\gamma$ will be denoted as $[\gamma]$. We remark that fixing $p\in M$ there is a one-to-one correspondence between the unitary sphere $S_{p}\subset T_{p}M$ and $\brg M$, given $v\in S_{p}\mapsto[\gamma_{v}]\in\brg M$, where $\gamma_{v}(t)=exp_{p}(tv)$.

	Also in \cite{EO} it is defined the topological compactification of a Hadamard manifold including its geodesic boundary. For this purpose there are defined the ``truncated cones'' $T_{p}(v,\theta,R):=C_{p}(v,\theta)\backslash\overline{B(p,R)}$. Here $C_{p}(v,\theta)=\{exp_{p}(tw)|\sphericalangle(w,v)<\theta,\,t\ge 0\}$ is the cone with vertices at $p\in M$, opening angle $\theta\in(0,\pi)$ and axis $v\in T_{p}M$, and $\overline{B(p,R)}$ is the ball with a centered in $p$ radius $R>0$. The open balls in $M$ together with the truncated cones $T_{p}(v,\theta,R)$ form a topological basis for $\overline{M}^g:=M\cup\brg M$. With respect to the cone topology $\overline{M}^g$ is a compact topological space, named by {\em geodesic compactification} of $M$. We remark that the geodesic boundary $\brg M$ has the topological subspace topology. 
	

	
	In this text we will focus on the Hadamard manifold $\hhr$. First of all, let us start precisely describing the geodesic compactification $\overline{\hhr}^g=(\hhr)\cup\brg(\hhr).$ The geodesic rays in the product manifold $\hhr$ have one of the following forms:
	
\begin{description}
  \item[(i)] There are geodesic rays $\gamma_1:[0,+\infty)\to \hh$ and $\gamma_2:[0,+\infty) \to \rr$ such that $\gamma(t) = (\gamma_1(t),\gamma_2(t))$.
  \item[(ii)] There exist a point $a\in \hh$ and a geodesic ray $\gamma_2:[0,+\infty) \to \mathbb{R}$ such that $\gamma(t)=(a,\gamma_2(t))$. In this case, $\gamma_2$ is explicitely given by $\gamma_2(t)=\epsilon t + b$, where $\epsilon = +1$ or $-1$ and $b\in \rr$ is a given constant. These geodesic rays are then the vertical half-lines.
  \item[(iii)] There exist a geodesic ray $\gamma_1:[0,+\infty) \to \hh$ and a constant $b\in \rr$ such that $\gamma(t)=(\gamma_1(t), b).$ These geodesic rays are the horizontal ones, contained in the slice $\hh\times\{b\}$.
\end{description}

	From now on, we denote by $p^-$ and $p^+$ the equivalence classes of the geodesics rays whose trace are of the form $\{a\}\times \rr^-$ and $\{a\}\times\rr^+$, respectively, calling these points as {\em poles}. We also use the word {\em equator} to the set of points in $\partial_g(\hhr)$ that are de equivalence classes of geodesics of constant height; notice that there is a natural identification of this set with the geodesic boundary of $\hh$; for abuse of notation we will denote the equator by $\partial_g \hh$.

	An important remark is that geodesics in $\rr$ are of the form
	$\beta =b+mt$, so equivalence classes can be identified with geodesic slope, that is $[\beta]\equiv m$, so any element of $\brg(\hhr)\backslash\{p^+,p^-\}$ can be noted as $([\gamma],m)$, where $\gamma$ is geodesic ray in $\HH$ and $m$ slope of a geodesic at $\rr$, the poles can be denoted as $p^+=[(p,+\infty)],p^-=[(p,-\infty)]$ see Figure \ref{fig:imagen1}.

	\begin{figure}[h]
		\centering
		\includegraphics[width=0.3\linewidth]{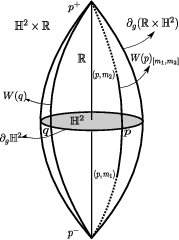}
		\caption{Geodesic boundary $\brg(\hhr)$}
		\label{fig:imagen1}
	\end{figure}

	\bigskip
	\bigskip
	
	\begin{remark}[Notation for points in the geodesic boundary]
		Although $\brg(\hhr)$ is not a product, we from now on will denote their points by ordered pairs because it inherit, in some sense, the notion of product of $\hhr$. The notation $(m,p)\in \partial_g\hhr$ will be used for the equivalence class of the geodesic ray $t\in[0,+\infty) \mapsto (\gamma_1(t),mt)$, where $\gamma_1$ is a unitary geodesic ray in $\hh$ such that $[\gamma_1]=p$ in $\partial_g\hh$.  
	\end{remark}

	The ``vertical lines'' in $\brg\hhr$ connecting $p^+$ to $p^-$ going through $p\in\brg\HH$ are called Weyl Chambers and will be noted as $W(p)$, that is, given $p\in \brg\hh$,
	$$W(p) = \{[(\gamma_1,\gamma_2)]|[\gamma_1]=p\in \partial_g\hh\}\cup \{p^-\}\cup\{p^+\}.$$ 
	Weyl Chambers’s open and closed interval will be used more frequently throughout the text, in which case the notation is $W_{[m_1,m_2]}(p)$(open and closed depends on context) where $m_1$ e $m_2$ is the initial and final slope in the second factor of each geodesic ray (possibly being $\pm$ infinity). Weyl Chamber is a concept that comes from symmetric spaces and the notation used in this text was introduced in \cite{KL}.

	\begin{remark}
		The product $\overline{\HH}\times\overline{\rr}$ is compact with the product topology, in which case the boundary would be the set $\brg\HH\times\overline{\rr}$ thus getting another compactification for $\hhr$ called  product compactification and the notation will be $\overline{\hhr}^{\times},\brx(\hhr)$. The product boundary on the geodesic boundary is reduced to the equator $\brg\HH$ and the two poles $\{p^+,p^-\}$, more details see \cite{KLM}.	
	\end{remark}

	\section{Invariant surfaces and $H-$fillable curves}\label{sec:invariant_surfaces}

	\subsection{Conventions and preliminary results}
	In \cite{KLM} the authors have defined the minimally fillable curves, in this text we present a definition of $H-$fillable curves for $\HM$. Since for $H>0$ we need to take care about orientation, we also define what is a surface that points ``inwards'' to $\hhr$. Before the definition, we will establish that throughout the following sections we will assume that {\em the mean curvature vector of the CMC surfaces extends continuously up to the boundary}.
	
	\begin{definition}\label{def:Hfillablecurve}
		A curve $\sigma\subset\brg\hhr$ is said to be $H-$fillable if it is the geodesic boundary of properly embedded surface in $\hhr$ with constant mean curvature $H$, where $0\leq H\leq 1/2$ and curve means Jordan curve or union of disjoint Jordan curves.
	\end{definition}

	\begin{definition}[Orientation of a CMC surface]\label{definition-orientation}
	
	Fix any point $o\in \hh$. Let $\Sigma$ be a CMC surface with normal vector field given by $\eta$. If $(q,m)\in \brg \Sigma$, we say that $\Sigma$ is oriented pointing inwards at $(q,m)$ if there exists $V\subset\overline{\hhr}$ a neighborhood of $(q,m)$ such that for any $(x,z)\in \Sigma\cap V$, it holds that $\left\langle \eta(x,z), \gamma_x'(s(x))\right\rangle \le 0$, where $\gamma_x:[0,+\infty)\to \hh$ is the unique unitary geodesic ray such that $\gamma_x(0)=o$ and $\gamma_x(s(x))=x$.
	    
	\end{definition}
	
	This means that the $\hh$ component of the normal vector field $\eta$ of $\Sigma$ points inwards to $\hh$ at infinity.
	

\begin{lemma}\label{LemaOrienta}
Suppose that $\Sigma$ is a CMC surface in $\hhr$ that points inwards at $(q,m)\in \brg \Sigma$ and that $\brg \Sigma$ is transversal to $W(q)$. If $m>0$, then there exists a neighborhood $V\in \overline{\hhr}$ of $(q,m)$ such that $\left\langle \eta(p), \partial_z \right\rangle > 0$ for any $p\in \Sigma \cap V$, where $\partial_z$ stands for the standard tangent vector field of the factor $\rr$ in $\hhr$. 
\end{lemma}

\begin{proof}
  Let $(p,m)\in\brg\Sigma$ such that $\brg\Sigma$, with $m>0$ is transversal to $W(q)$ in $(p,m)$. From the convergence of $\eta$ together with the transversality we have that there is a neighborhood $V$ such that $\Sigma\cap V$ is a graph over some unbounded domain $\Omega \subset \hh$. Let $v:\Omega\rightarrow\rr$ such that $\Sigma\cap\ V=Graph(v)$, then the normal vector is $\eta=\frac{(\nabla v,-1)} {\sqrt{1+|\nabla v|^2}}$ or $\eta=\frac{(-\nabla v,1)}{\sqrt{1+|\nabla v|^2}}$.  
  
  Suppose $\eta=\frac{(\nabla v,-1)}{\sqrt{1+|\nabla v|^2}}$. If $\langle \eta, \gamma'\rangle\le0$, we have $\langle \nabla v, \gamma'\rangle\le0$, so $(v\circ\gamma)'(s)\ge0$ , for $s$ large. Thus, $\Sigma\cap\gamma_v$ is a non-increasing function, which contradicts the fact $([\gamma], m)\in\brg\Sigma$, $m>0$, because $(v\circ \gamma)'\to m$. Therefore $\eta=\frac{(\nabla v,1)}{\sqrt{1+|\nabla v|^2}}$, that is, $\eta$ is pointing upwards. The same reasoning can be done for $m<0$, assuming that $\eta=\frac{(-\nabla v,1)}{\sqrt{1+|\nabla v|^2}}$ and $ \langle \eta, \gamma'\rangle\ge0$.
\end{proof}

\begin{remark}
  If $\brg\Sigma$ intersects $W(p)$ transversally at $(p,0)$ from the convergence and continuity of $\eta$ we have that $v$ is a bounded function and $\eta$ is pointing upwards or downwards in a neighbourhood $V$ of $(p,0)$. Indeed, if $\Sigma$ converges to $(p,0)$ with a horizontal normal vector $\eta$, it means that $\lim_{x\mapsto p}\frac{\nabla v}{\sqrt {1+|\nabla v|^2}}=0$; therefore $\lim_{x\mapsto p}|\nabla v|=+\infty$ which means that $v$ is not bounded, contradicting the fact that $\Graph(v)$ converges to $(p,0)$.
 \end{remark}

\begin{figure}[h]
    \centering
    \includegraphics[width=0.33\linewidth]{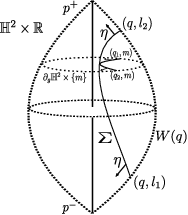}
    \caption{Orientation}
    \label{fig:my_label}
\end{figure}

	\subsection{Useful examples}
	
	In the following we present the graphs of some functions in $\hh$ and their respective geodesic boundaries, that are particular cases of well-known invariant CMC surfaces of $\hhr$. The litterature about those invariant surfaces is very extensive and includes the works of R. Sa Earp \cite{STP}, R. Sa Earp and E. Toubiana \cite{STM} and \cite{SET}, B. Nelli, R. Sa Earp, W. Santos and E. Toubiana \cite{BRWE}, G. Citti and C. Senni \cite{{CS}} and P. Klaser, R. Soares and the second author \cite{KST}; the case $H=1/2$, in particular, appear in \cite{MBR} and in \cite{CH}. We remark that some of the results of this Section are also presented in the previously mentioned works, but for sake of completeness, to fix some notation and to emphasize the desired properties, we write them here. 
 
	First note that if $\Omega\subset \hh$ and $u:\Omega \to \mathbb{R}$ is a $C^2$ function such that its graph $$Gr(u) = \{(x,u(x))|x\in \Omega\}$$ is a constant mean curvature $H$ surface in $\hhr$ (with mean curvature vector pointing {\em upwards}
), then it is well known that $u$ satisfies the elliptic quasilinear PDE
	\begin{equation}\label{cmc-equation}
		Q_H(u):=\dive\left(\frac{\nabla u}{\sqrt{1+|\nabla u|^2}}\right)- 2H = 0
	\end{equation}
	where $\dive$ and $\nabla$ are those ones defined by the riemannian metric of $\hh$.
	
	It is important to point out that in most cases in literature the mean curvature vector of a graph is defined to point downwards and hence there is a $+$ sign in front of $2H$, instead of the minus sign in equation \eqref{cmc-equation}. Here we have chosen the opposite orientation to avoid a lot of $-$ signs in the constructed functions. As it will be clear in the following, the minus sign correspond just to multiply by $-1$ each of those functions.

	Choosing functions that are defined in terms of the distance function $s$ of $\hh$ to a circle (that may be assumed to be a point, the circle of radius $0$), a horocircle or an hypercicle it is possible to construct surfaces that are invariant by a $1-$parameter of elliptic, parabolic or hyperbolic isometries, respectively, with respect to the $\hh$ factor, surfaces initially introduced in \cite{STM},\cite{BRWE} and \cite{STP}. More precisely:
	\begin{description}
		\item[Rotational graphs:] If $o\in \hh$ is a point and $r\ge 0$, we chose $\Omega = \{x\in \hh | d(x,o) \ge r\} = \hh \setminus B_r(o).$ In this case $s: \Omega \to \mathbb{R}$ is the distance to the circle of radius $r$ centered in $o$, $\partial B_r(o)$, and satisfies $\Delta s(x) = \coth(r+s(x))$ for all $x\in \Omega$.
		\item[Parabolic graphs:] If $p\in \brg \hh$ is a point in the geodesic boundary of $H^2$ and $D$ is a fixed horodisc having $p$ as boundary point, then choosing $\Omega = \hh\setminus D$ one has $s$ the distance to $\partial D$, a horocicle passing through $p$, and $s$ satisfies $\Delta s(x) = 1$.
		\item[Hyperbolic graphs:] Let us chose (the image of a) geodesic $\gamma$. Here it is convenient to consider the ``distance $s$ to $\gamma$ with sign'', in the following sense: $\gamma$ splits $\hh$ in two connected components $\Omega^-$ and $\Omega^+$, one of them, say $\Omega^+$, containing in its geodesic boundary a previously chosen point $q$. We say that $s(x)$ to $\gamma$ is positive if $x\in \Omega^+$ and $s(x)<0$ if $x\in \Omega^-$. It then holds that $\Delta s(x) = \tanh(s(x)).$

	\end{description}
	In all of the three cases, we chose $u=\phi \circ s: \Omega \to \mathbb{R}$ and therefore the PDE \eqref{cmc-equation} gives rise to the ODE
	
	\begin{equation} \label{CMC-edo}
		\left(\frac{\phi'(s)}{\sqrt{1+\phi'(s)^2}}\right)'+ \left(\frac{\phi'(s)}{\sqrt{1+\phi'(s)^2}}\right) \Delta s -2H=0.
	\end{equation}
	
	In the following we briefly describe some of the obtained invariant surfaces (see more details in \cite{KST}) and also we emphasize the geodesic boundary of each graphs. Such geodesic boundaries provide the first -- and almost all -- examples of $H-$fillable curves.

	\subsubsection{The unduloids $\Hund_r$ and the caps $\Hscap$}

	Let us start with the rotational surfaces.

The first rotational surface we present is the  $\Hund_r$. We consider the distance function to a circle of positive radius $r$ centered at $o\in \hh$. Solving \eqref{CMC-edo} with initial condition $\phi'(0)=+\infty$, we find the general expression $\phi(s)=\Hund_r(s)$:	
	\begin{equation}\label{eq-hnod}
		\Hund_r(s):=\displaystyle\int_0^s \frac{2H(\cosh(r+t)-\cosh(r))+\sinh(r)}{\sqrt{\sinh^2(r+t)-(2H(\cosh(r+t)-\cosh(r))+\sinh(r))^2}}dt,
	\end{equation}
	defined in $[0, +\infty)$.
	For abuse of notation, we also denote by $\Hund_r$ what we call ``$H-$unduloid'': the CMC$-H$ surface obtained gluing together the graph of $u=\Hund_r\circ s$ and its Alexandrov reflection around the slice $\hh\times\{0\}.$ The name ``unduloid'' was first given in \cite{KST} and it is inspired in the homonym surfaces of $\mathbb{R}^3$.\footnote{If we choose $\phi'(0)=-\infty$, we may define the ``$H$-nodoids''; reflecting their graphs we obtain immersed but not embedded rotational surfaces. see \cite{KST}.}
	
		\
	
		To simplify notation, the circle in $\brg(\hhr)$ that corresponds to all geodesic rays in $\hhr$ with slope $m$ in the factor $\rr$ will be denoted by $\brg\hh\times\{m\}$.

The $l_H$ constant defined for $0\le H \le 1/2$, given by
        \begin{equation}\label{lH}l_H:=\dern,\text{ where
}l_{1/2}:=+\infty. \end{equation}
is an important constant that will be used throughout the paper to study the behavior of the asymptotic boundary of constant mean curvature surface, such a constant was defined and studied by G. Citti and C. Senni in \cite{CS}.

	\begin{proposition}\label{p1}
The geodesic boundary of any $\Hund$ is $\sigma=\brg\hh\times\{-l_H\}\cup \brg\hh\times\{l_H\}$.
	\end{proposition}
	\begin{remark}
	 $\sigma$ is H-fillable curve for $0\le H<1/2$  and the case $H=1/2$ the geodesic boundary is $\{p^+,p^-\}$.   
	\end{remark}
	\begin{proof}	
		For $\HM$, $o\in\HH$ and $r>0$  fixed, consider $\Omega := \hh \setminus B_r(o),$ $s: \Omega \to \rr$ the distance to $\partial B_r(o)$ and 
		$$\Sigma^{+}=\{(p,\Hund_{r} (s(p)))\},$$
		where $\Hund_r$ is given by (\ref{eq-hnod}).
		
		Since $\Sigma^+$ is a radial graph, it is enough to analyse the behavior of the derivative $\frac{d}{ds}(\Hund_r(s))$ in the infinity: 
		$$\lim_{s\rightarrow+\infty}\frac{d}{ds}(\Hund_r(s))=\dern=l_H.$$ Thus, for any arc length parameterized geodesic ray $\gamma:[0,+\infty)\to \hh$ with $\gamma(0)=o,$ the geodesic $\beta(t)=(\gamma(t),l_H t)$ in $\hhr$ is asymptotic to the curve  $(\gamma,\Hund_r\circ\gamma))$. Since all geodesic rays starting from a same chosen point $o$ cover all $\brg \hh$, it holds that $\brg(\Sigma^{+})\subset\brg(\HH)\times\{l_H\}$. Furthermore, if $(p,l_H)\in \brg(\hhr)$, and $\gamma$ is such that $[\gamma]=p$ in $\brg\hh$, then $\lim_{t\to+\infty} (\gamma(t), \Hund_r(\gamma(t)))=\lim_{t\to+\infty}(\gamma(t),l_H t)$, which implies that $\brg(\Sigma^{+}) \supset \brg(\hh)\times \{l_H\}.$

		Since the derivative of $\Hhund_r$ at $s=0$ is $+\infty$, making the Alexandrov reflection in the graph $\Sigma^{+}$ we obtain the complete surface $\Sigma=\Hund_r$ and it is immediate that $\brg(\Sigma)=\brg(\HH)\times\{-l_H,l_H\}$. See Figure \ref{fig:imagen2}.

	\end{proof}
	
	\begin{figure}[h]
		\centering
		\includegraphics[width=0.25\linewidth]{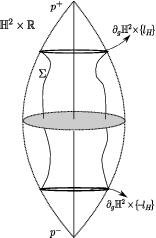}
		\caption{Geodesic boundary of $\Hund_r$}
		\label{fig:imagen2}
	\end{figure}

	Now we consider the the ``$H-$spherical cap'' surface of revolution obtained under the conditions $r=0$ and $\phi'(0)=0$ in the equation \eqref{CMC-edo}: 
	
	\begin{equation}
	\label{eq-def-cap}
			\Hscap(s)=2H\displaystyle\int_0^s \frac{(\cosh(t)-1)}{\sqrt{\sinh^2(t)-4H^2(\cosh(t)-1)^2}}dt. 
	\end{equation}
	
Since $0\le H\le 1/2$, it holds that $\Hscap$ has an entire graph, that we denote also by $\Hscap$. In addition $\lim_{s\to \infty} \Hscap(s)=+\infty$ and  $\lim_{s\to \infty} \Hscap'(s)=2H/\sqrt{1-4H^2}$. The expression explicitly of the $\Hscap(s)$ for $0\le H\le 1/2$ is:

	$$\Hscap(s)=l_Hln \left| \frac{1+\sqrt{\frac{1-4H^2\tanh^2(s/2)}{1-4H^2}}}{1-\sqrt{\frac{1-4H^2\tanh^2(s/2)}{1-4H^2}}}\right|.$$

	\begin{proposition}\label{propgeodboundaryHcap}
	The geodesic boundary of $\Hscap$ is $\brg\hh\times\{l_H\},$ therefore $\brg\hh\times\{l_H\}$ and $\brg\hh\times\{-l_H\}$ are $H-$fillable curves, for $0\le H< 1/2$. 
	\end{proposition}

\begin{proof}

The proof follows reproducing the proof of Proposition \ref{p1} replacing $\Hund$ by $\Hscap$. 
\end{proof}

\subsubsection{Hyperbolic surfaces}\label{sec-hipers}

	In the following we present the surfaces that are invariant under a $1-$parameter family of hyperbolic isometries
	in $\hh$, as defined in \cite{KST}. There is one little difference with the notation, in comparation with that work: while the in that paper the authors consider the distance to a hypercicle and have $[0,+\infty)$ as an uniform domain to all functions of the family, for our applications is more convenient consider always the distance to a geodesic and vary the domain. 
	
	As observed in the beginning of Section \ref{sec:invariant_surfaces}, in this case we consider the distance to a geodesic, giving rise to a family of initial value problems which ODE is
	
	\begin{equation}\label{eq-hipernod}
		\left(\frac{\phi'(s)}{\sqrt{1+\phi'(s)^2}}\right)'+ \left(\frac{\phi'(s)}{\sqrt{1+\phi'(s)^2}}\right) \tanh(s) - 2H=0.\\
	\end{equation}
	
	The initial condition will be chosen after some considerations.
	
	Naming $g=\phi'(1+\phi'^2)^{-1/2}$, the ODE above is rewritten as 
		\begin{equation}\label{eq-hipernod-g}
		g'(s) + \tanh(s) g(s) - 2H=0,
	\end{equation}
and we observe that 
\begin{equation}\label{phieg}
\phi'=\frac{g}{\sqrt{1-g^2)}}, 
\end{equation}
therefore $\phi$ is not defined if $|g|\ge 1$. 
The general solution is given by
	
	$$g(s) = g_C(s) = \frac{2H\sinh(s)+C\sqrt{1-4H^2}}{\cosh(s)},$$
	
where $C$ is some constant to be chosen. 

The reason why we multiply by the non-standard factor $\sqrt{1-4H^2}$ is that this choice has a geometrical meaning (see Remark \ref{remarkMC}, item (iii)) that also simplify computations, as follows: setting $R:=\tanh^{-1}(2H)$, it holds that $\sinh(R)=\dfrac{2H}{\sqrt{1-4H^2}}$ and $\cosh(R)=\dfrac{1}{\sqrt{1-4H^2}}$, therefore $g$ may be rewritten in the following more symmetric way: 

\begin{equation}\label{gC}
g_C(s) = \frac{\sinh(R)\sinh(s) + C}{\cosh(R)\cosh(s)}.
\end{equation}

If easily follows that $|g(s)|<1$ if and only if $-\cosh(R)\cosh(s) < \sinh(R)\sinh(s) + C <\cosh(R)\cosh(s), $ that is,
		$-(\cosh(R)\cosh(s) + \sinh(R)\sinh(s)) <C < \cosh(R)\cosh(s) - \sinh(R)\sinh(s),$ i.e.,
		
		\begin{equation}\label{MCdedution}
			-\cosh(R+s) < C < \cosh(s-R)  
		\end{equation}

		If $|C|<1$ it is clear that  \eqref{MCdedution} is satisfied for all $s\in \mathbb{R}$, hence \eqref{phieg} has solution defined in $\mathbb{R}$.
		
		If $C\ge 1,$ then the first inequality in \eqref{MCdedution} is trivially satisfied and the second one holds if and only if $|s-R| > \cosh^{-1}(C)$, that is, if $s\in (-\infty,R-\cosh^{-1}(C))\cup(R+\cosh^{-1}(C),+\infty)$. Analogously, if $C\le -1$, then the second inequality in \eqref{MCdedution} is always satisfied and the first one holds if and only if $s\in(-\infty,-R-\cosh^{-1}(-C))\cup(\cosh^{-1}(-c)-R,+\infty)$. For our construction it is convenient to consider the cases where $s\to +\infty$, so for each $C$ we consider the maximal interval $(M_C,+\infty)$ where \eqref{phieg} has solution.
		
		Furthermore, it is interesting to investigate what happens with $\phi$ as $s\to M_C^+$. We first claim that $\phi:(M_C,+\infty)$ may be continuously extended to $M_C$ if $|C|>1$. In fact, by L'Hopital's law,
		$$\lim_{s\to M_C^+} \frac{1-g(s)^2}{s-M_C}= -2\lim_{s\to M_C^+}g(s)g'(s).$$ For $C>1$ it holds that this quantity is $\neq 0,$ since (after a straightforward computation) $$g'(M_C) =2H- \frac{2HC-\sqrt{C^2-1}}{C-2H\sqrt{C^2-1}}.$$ The conclusion is that $g(1-g^2)^{-1/2}$ has the same behaviour of $(s-M_C)^{-1/2}$ near $M_C$, what implies that the integral defining $\phi$ converges if $C>1$. The case $C<-1$ is analogous.
		
		For the case $C=1$, first notice that $M_C=R$ and some straightforward computations give us
		$$\frac{g_1(s)}{\sqrt{1-g_1(s)^2}} = \frac{\sinh R\sinh s + 1}{\sinh s - \sinh R},$$ wich implies (after a chain of change of variables) that $\phi_1$ has the explicit expression (up to additive constants)
		$$\phi_1(s) = \sinh(R) s + \cosh(R)\ln\left|\frac{e^s-e^R}{e^s+e^{-R}}\right|;$$ notice that $\phi(s) \to -\infty$ and $\phi_1'(s) \to +\infty$ as $s\to R^+$. 
		
Analogously, for $C=-1$ one has $M_{-1}=-R$ and $$\phi_{-1}(s) = \sinh(R) s + \cosh(R)\ln\left|\frac{e^s+e^R}{e^s-e^{-R}}\right|,$$ also $\lim_{s\to -R^+}\phi(s) = +\infty,$ with $\lim_{s\to -R^+} \phi'(s)=-\infty$.

In all cases, notice that $\lim_{s\to +\infty} g_C(s) = \tanh(R)=2H,$ so $$\lim_{s\to+\infty} \phi_C(s) = l_H.$$

All conclusions above are summarized in the following result:

	\begin{proposition}\label{HiperC}
	For each $C\in \mathbb{R}$ there exists a maximal interval of the form $(M_C,+\infty)$ where $|g|<1$, described below. Moreover, defining $\phi=\phi_C:(M_C,+\infty) \to \rr$ as a solution of $$\phi'=\frac{g_C}{\sqrt{1-g_C^2}},$$ the behavior of $\phi$ is also described as follows.
		\begin{description}
			\item[(i)]If $ C>1$, then $M_C=R+\cosh^{-1}(C)$ and $\phi$ may be continuously extended to the extreme $M_C$ with $\phi'(M_C) = +\infty$;
			\item[(ii)] If $C=1$, then $M_1=R$. Furthermore, $\phi$ has the explicit expression (up to additive constants)
			\begin{equation}\label{C=1}
			\phi(s) = \sinh(R) s + \cosh(R)\ln\left|\frac{e^s-e^R}{e^s+e^{-R}}\right|,
		    \end{equation}
			also $\lim_{s\to R^+}\phi(s) = -\infty,$ with $\lim_{s\to R^+} \phi'(s)=+\infty$;
			\item[(iii)] $-1 < C < 1$, then $M_C=-\infty$, giving rise to what we call a family of ``hipercaps''. It also holds that $\lim_{s\to +\infty} \phi'(s) =l_H.$\footnote{Although $\lim_{s\to+\infty}\phi'(s) =l_H, $ the points at $\brg \hhr$ corresponding to $s=+\infty$ are at height $l_H$ in $\brg\hhr$ since the geodesic ray must be taken going to infinity.}
			\item[(iv)] If $C=-1$, then $M_{-1}=-R$ and $\phi$ has the explicit expression (up to additive constants)
			$$\phi(s) = \sinh(R) s + \cosh(R)\ln\left|\frac{e^s+e^R}{e^s-e^{-R}}\right|,$$ also $\lim_{s\to -R^+}\phi(s) = +\infty,$ with $\lim_{s\to -R^+} \phi'(s)=-\infty$;
			\item[(v)] If $C<-1$, then $M_C=\cosh^{-1}(-C) - R$ and $\phi$ may be continuously extended to $M_C$ with $\phi'(M_C) = -\infty$.
		\end{description}
		In all cases above it holds that $\lim_{s\to +\infty} \phi(s) = +\infty$ with $\lim_{s\to +\infty} \phi'(s)=l_H$.
	\end{proposition}

	\begin{figure}[h]
		\centering
		\includegraphics[width=0.7\linewidth]{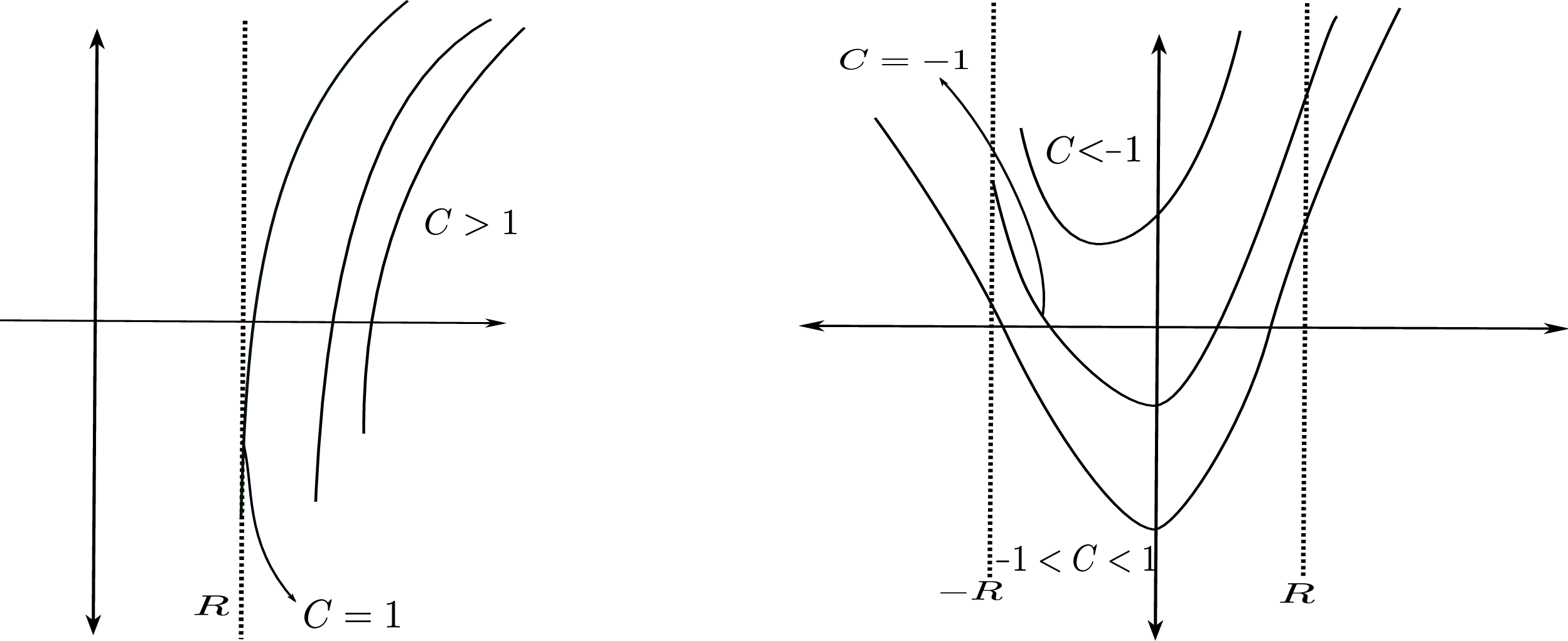}
		\caption{Proposition \ref*{HiperC} }
		\label{fig:hiperc}
	\end{figure}

	\
	
	\begin{remark}\label{remarkMC}
		\begin{enumerate}
			\item The function $\phi$ has a minimum in cases (iii)-(iv), since their values go from $+\infty$ to $+\infty$. This minimum occurs at the point $x_m$ satisfying $g(x_m)=0,$ i.e. $$x_m=\ sinh^{- 1}\left(\frac{C}{\sinh(R)}\right).$$
			\item Since vertical translations are isometries in $\hhr$, we from now on suppose without loss of generality that $\phi(M_C)=0$ in cases (i) and (v), that is,
			$$\phi(s) = \int_{M_C}^s \frac{g(t)}{\sqrt{1-g(t)^2}}dt.$$ Although this simplify some computations, it is not the best way of think about the family of functions $\phi_C$ that we have produced, in this case we should think that the initial condition goes to $-\infty$ as $C\to 1^+$ or to $+\infty$ as $C\to -1^-$.
			\end{enumerate}
			
			\end{remark}
			
			\begin{remark}\label{equidistante}
			   Classical theorems on CMC surfaces of $\mathbb{R}^3$, as done by Jenkins and Serrin \cite{JS} on the 60's give conditions on the geodesic curvature of a bounded domain $\Omega\subset\rr^2$ to guarantee that a CMC graph defined on $\Omega$ take values $\pm \infty$ on portions of the boundary. The same idea occurs in the surfaces corresponding to $C=\pm 1$ as above. The geometric idea is that the surface must be asymptotic to a cylinder over a curve of geodesic curvature $2H$. Precisely, if $R=\tanh^{-1}(2H)$ and if $\beta_R$ is the curve at distance $R>0$ to the geodesic joining $q_1,q_2\in \brg \hh$, then the surface $\beta_R\times\rr$ has constant mean curvature $H=\tanh(R)/2$  and its geodesic boundary is $W(q_1)\cup W(q_2)$.
	\end{remark}

	\
	
	We now return our attention to some hyperbolic surfaces for $0\le H<1/2$. Let $\gamma$ be a fixed geodesic, $s:\hh\to \mathbb{R}$ the distance with sign (as defined in the beginning of this Section) and for each $r\ge R=\tanh^{-1}(2H)$, define
	
	\begin{equation}\label{omegaC}
		\Omega_r:= \{x\in \hh | s(x)>r\};\,\, \beta_r:=\partial \Omega_r = \{x\in \hh | s(x) = r\}.
	\end{equation}
	
	Choosing $C\ge 1$ as the constant such that $M_C=r$, that is, $C=\cosh(r-R)$, by the previous Proposition the function $\Hhund_r:\Omega_r \to \mathbb{R}$ given by $\Hhund_r(x)=\phi_C(s(x))$ is well defined and its graph 
	\begin{equation}\label{Hhundgraph}
		\Sigma_r^+:=\{(x,\Hhund_r(x)) | x\in \Omega_r\}
	\end{equation}
	has constant mean curvature $H$ (with normal vector pointing upwards). Furthermore, if $r>R$ (what implies that $C>1$), then $\Hhund_r$ may be continuously extended to $\beta_r$, having vertical slope in $\beta_r$, which implies that it can be glued with its reflection $\Sigma^-$ around the plane $\hh\times \{0\}$ to produce a immersed CMC surface that we also denote by $\Hhund$. This surface is also embedded and we call it a ``hyperunduloid'', in analogy with the unduloids of the previous Section.\footnote{Analogously, for the cases $C<-1$ (corresponding to another domains), we can construct complete CMC $H$ surfaces with self-intersection and we call them as ``hypernodoids''.  Again the inspiration on these names come from Delaunay's surfaces in Euclidean 3-space, see\cite{KST}.}

	\subsubsection*{Geodesic boundary of $\Hhund_r$}
	
	The rectangles of Proposition 2.1 in \cite{SET} are minimally fillable curves used as barriers in \cite{KLM} , in the case of $H-$fillable curves we build curves which in the particular case of $ H = 0 $ coincide with those rectangles; these curves are also used as barriers in subsequent results.
	
	In the following proposition we show that the geodesic boundary of the $\Hhund$ are like rectangles; the most interesting is that their heights (slopes in $\brg(\hhr)$) depend on the curvature, being zero for $ H = 0 $ and that are approaching to infinite for $ H = 1/2 $.
	
	Given $q_1$ and $q_2$ distinct points at $\brg \hh$ we denote by $\widehat{q_1,q_2}$ one of the arcs in $\brg \hh$ defined by them, that is, the closure of one of the connected components of $\brg \hh \setminus\{q_1,q_2\}.$ When there is no ambiguity, we do not specify explicitly which arc is considered. 
	
	\begin{proposition}
		\label{barreirastall}
		For any $r>R$, the geodesic boundary $\Hhund_r$ is the rectangle $\sigma=\widehat{q_1, q_2} \times \{-l_H,l_H\} \cup (W _ {[-l_H,l_H]} (q_1) \cup W _ {[-l_H,l_H]}(q_2 ))$, where $l_H=\dern$. In particular, $\sigma$ is $H-$fillable with $0< H<1/2$.
	\end{proposition}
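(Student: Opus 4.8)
The plan is to realize $\sigma$ as the geodesic boundary of a hyperunduloid. Let $\gamma$ be the geodesic of $\hh$ with endpoints $q_1,q_2$ at infinity, oriented so that the component $\Omega^+$ of $\hh\setminus\gamma$ whose ideal boundary is the prescribed arc $\widehat{q_1,q_2}$ is the one where the signed distance $s$ to $\gamma$ is positive. Fix any $C>1$ and let $\Hhund$ be the surface obtained by gluing the graph $\Sigma^+$ of $\phi_C\circ s$ over $\Omega_C$ to its reflection $\Sigma^-$ across $\hh\times\{0\}$. By Proposition \ref{HiperC}(i) and the discussion after it, $\phi_C$ extends to $\beta_C$ with vertical slope, so $\Hhund$ is a complete, properly embedded surface of constant mean curvature $H$ (with the same inward orientation as in Proposition \ref{p1}); it remains only to prove that $\brg\Hhund=\sigma$.

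For that computation I would pass to the upper half-plane model, placing $\gamma$ on the imaginary axis so that $q_1=\infty$ and $q_2=0$. Writing a point of $\hh$ as $re^{i\theta}$, the signed distance to $\gamma$ is $s=\ln\tan(\theta/2)$ and the foot of the perpendicular to $\gamma$ sits at signed arclength $t=\ln r$ along $\gamma$; in particular the height $\phi_C(s)$ depends only on $\theta$, reflecting invariance under $z\mapsto\lambda z$. Two asymptotic estimates drive everything. First, since $\phi_C'(s)\to l_H$ by Proposition \ref{HiperC}, one has $\phi_C(s)=l_H\,s+O(1)$. Second, from $\cosh d=\tfrac{r^2+1}{2r\sin\theta}$ together with $s=-\ln\sin\theta+O(1)$ near $\theta=\pi$, one gets $d_{\hh}(i,re^{i\theta})=s+|t|+O(1)$ on the escaping region. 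Recalling that a sequence of surface points going to infinity limits to the point $(m,p)\in\brg\hhr$ with $p=\lim re^{i\theta}$ and slope $m=\lim \phi_C(s)/d_{\hh}$, these two estimates let me read off every boundary point.

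I would then sort the limits by how $(r,\theta)$ escape. If $\theta\to\pi$ with $r$ bounded away from $0$ and $\infty$, the $\hh$-projection tends to an interior point $p$ of $\widehat{q_1,q_2}$ while $s\to+\infty$ dominates $|t|$, so the slope tends to $l_H$; with the reflected sheet this gives the two horizontal edges $\widehat{q_1,q_2}\times\{-l_H,l_H\}$. If instead $r\to+\infty$ (so $t\to+\infty$), the projection tends to $q_1$ for every $\theta$, and the slope equals $\lim l_H\,s/(s+t)$; letting the ratio $t/s$ run through $[0,+\infty]$ realizes every value in $[0,l_H]$ on $\Sigma^+$ and, symmetrically, every value in $[-l_H,0]$ on $\Sigma^-$, producing the whole vertical side $W_{[-l_H,l_H]}(q_1)$. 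The case $r\to0$ gives $W_{[-l_H,l_H]}(q_2)$ in the same way, and the same bookkeeping shows that no other limits arise, so $\brg\Hhund=\sigma$.

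I expect the delicate step to be precisely this corner analysis: controlling the two competing rates at which a point can approach $q_1$ radially in $\hh$ (governed by $t$) and approach the arc angularly (governed by $s$), and checking that their interplay sweeps out exactly the closed slope interval $[-l_H,l_H]$ and nothing more. The horizontal edges follow routinely from $\phi_C'\to l_H$, essentially as in Proposition \ref{p1}; the real content is that the vertical sides reach height exactly $l_H$, which is what links the shape of the rectangle to the mean curvature through the constant $l_H=\dern$.
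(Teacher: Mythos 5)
Your proposal is correct and builds the same filling surface as the paper: the hyperunduloid $\Hhund$ obtained from $\phi_C\circ s$ with $C>1$ glued to its Alexandrov reflection, with the horizontal edges $\widehat{q_1,q_2}\times\{\pm l_H\}$ read off from $\phi_C'\to l_H$ exactly as in Proposition \ref{p1}. Where you genuinely diverge is the corner analysis showing $W(q_i)\cap\brg\Sigma^+=W_{[0,l_H]}(q_i)$. The paper argues qualitatively: for each slope $0\le m\le l_H$ it introduces the cone $K_m$ with vertex on $\gamma$, shows that every cone-topology neighborhood of $q_1$ contains graph points above $K_m$ (along geodesic rays to nearby ideal points, where the slope of the graph tends to $l_H$) and below $K_m$ (along level curves of $s$, where the height is constant), and concludes by connectedness that the graph meets $K_m$ arbitrarily close to $q_1$; slopes outside $[0,l_H]$ are excluded by the same comparison. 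You instead work in the half-plane model and compute the limiting slope directly from the two asymptotics $d_{\hh}(i,re^{i\theta})=s+|t|+O(1)$ and $\phi_C(s)=l_H s+O(1)$, obtaining $l_H/(1+\lim t/s)$ and sweeping out $[0,l_H]$ by varying the ratio. Your route is more quantitative -- it identifies exactly which escaping directions land at which boundary slope and makes the ``nothing more'' claim transparent -- while the paper's cone argument is coordinate-free and is reused without change for the parabolic surfaces of Proposition \ref{p2}. One small caveat: $\phi_C(s)=l_H s+O(1)$ does not follow from $\phi_C'\to l_H$ alone (that only gives $\phi_C(s)=l_H s+o(s)$); here the $O(1)$ form does hold because $g_C(s)\to 2H$ exponentially, and in any case the $o(s)$ estimate already suffices for your slope computation since $d\ge s-O(1)$.
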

	
	\begin{proof}
		
		Let $\gamma$ be the geodesic in $\hh$ with endpoints $q_1$ and $q_2$, $s:\hh\to \mathbb{R}$ the distance to $\gamma$ with sign (as defined in the beginning of this Section) such that $\brg\Omega^+ = \widehat{q_1,q_2}$ and choose  $r>R$, defining $\Omega_r$, $\beta_r$ and $\Sigma_r^+$ as in \eqref{omegaC} and \eqref{Hhundgraph}.
		
		Again for $\HM$  we get 
		$$ \lim_{s\rightarrow + \infty} \Hhund_r'(s)= \dern$$
		so we get $ \widehat{q_1, q_2}\times\{\dern\}\subset\brg\Sigma_r^{+}$.

		In what follows we will prove that $W_{[0,l_H]}(q_1),W _{[0,l_H]}(q_2)\subset\brg\Sigma_r^{+}$.
		
		Let $o\in \gamma$ be a fixed point and consider the cone $K_m$ with slope $m$ given by
		$$K_m=\{(\gamma_1(t), mt)\in\hhr|t\ge 0, \gamma_1\text{ geodesic }, \gamma_1(0)=o, |\gamma_1'|=1\}.$$

		It suffices to prove that for $0\le m \le l_H$ the intersection of $ K_m $ with the graph of $\Hhund_{r}$  converges to points of $W_{[0,l_H]}(q_1), W_{[0,l_H]}(q_2)$ at ``height'' $m$ on the geodesic boundary.
		
		\begin{figure}[h]
			\centering
			\includegraphics[width=0.3\linewidth]{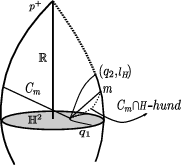}
			\caption{Cone $K_m$}
			\label{fig:cone}
		\end{figure}

		The cone $K_m$ separates $\hhr$ in two connected components (one from above and one from below). We claim that for all $0\le m <l_H$, if $U$ is an open subset (w.r.t. the cone topology) in $\overline{\HH}$ containing $q_1$ (or $q_2$), then there are points on the graph of $\Hhund_{r}$ above and below $K_m$ that project inside $U$.
		
		In fact, let $U$ be an open neighborhood (w.r.t. the cone topology) of $q_1$ and suppose without loss of generality that $q_2 \notin U$. For any point $q\in U\cap\brg\hh\setminus\{q_1\}$ there is a geodesic ray $\eta:[0,+\infty) \to \hh$ such that $[\eta]\equiv q$. Since $q\neq q_1$ and $q \neq q_2$, then $s(\eta(t)) \to +\infty$ as $t\to +\infty$; therefore $\Hhund_r'(s(\eta(t)))\to l_H$, so the graph of $\Hhund_{r}$ go above $K_m$ as $\eta(t)$ goes to $q$. On the other hand, if $\beta(t)$ is a level curve of $s$ in $\Omega_r$ (for instance $s(\beta(t))=1,\forall t$) then $\beta$ is asymptotic to $q_1$(and $q_2$) and $\Hhund_{r}(s(\beta(t)))=\Hhund_{r}(1)$, that is, the height of $\Hhund_{r}$ along the level curve is constant, thus $K_m$ is above $\Hhund(s(\beta(t)))$ for $t$ large. 
		
		Now let $\{U_n\}$ be a sequence of open sets in $\overline{\HH}$ containing $q_1$ and converging to $q_1$, such that the interior of $\partial_g U_n \neq \emptyset$. In each $U_n$ there are $x_n^+,x_n^-$ such that $\Hhund(x_n^+) $ and $ \Hhund(x_n^-) $ are above and below of $K_m$, respectively. Since the graph of $\Hhund_{r}$ is connected, for each $n$ there exists $x_n\in U_n$ such that $(x_n,\Hhund(x_n))\in K_m$, where a first coordinate converges to $q_1$ and second coordinate diverges, in $\overline{\hhr}^g$ it converges to $m$ in the slope sense. It is then proved that  $W_{[0,l_H]}(q_1)\subset\brg\Sigma_r^{+}$. Obviously the same holds for $q_2$.
		
		On the other hand, if $m<0$ or $m>l_H$, then analogously we may observe that for sufficiently small neighborhoods of $q_1$ or $q_2$ all corresponding points of $\Sigma_r^+$ are above or bellow of $K_m$, respectively, concluding the proof that $W(q_i) \cap \brg \Sigma_r^+ = W_{[0,l_H]}(q_i)$.
		
		For obtaining the desired surface, we make Alexandrov reflection of $\Sigma_r^+$, obtaining $\Hhund$, see Figure \ref{fig:imagen4}.

	\end{proof}
	\begin{figure}[h]
		\centering
		\includegraphics[width=0.22\linewidth]{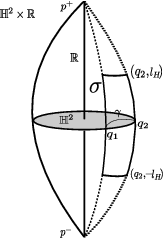}
		\caption{Proposition \ref{barreirastall}}
		\label{fig:imagen4}
	\end{figure}
	
	\begin{proposition}
		\label{ExemplosNovos}
		Let $\sigma_1=(\widehat{q_1,q_2}\times\{l_H\})\cup W _{[-\infty,l_H]}(q_1)\cup W _{[-\infty,l_H]}(q_2)$ and $\sigma_2=(\widehat{q_1,q_2}\times\{l_H\})\cup W _{[0,+\infty]}(q_1) \cup W _{[0,+\infty]}(q_2)$, where $l_H=\dern$. Then $\sigma_1$ and $\sigma_2$ are geodesic boundary of a properly embedded surface with constant mean curvature $H$, with $0\leq H<1/2$.
	\end{proposition}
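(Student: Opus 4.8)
The plan is to realize $\sigma_1$ and $\sigma_2$ as geodesic boundaries of the \emph{non-reflected} graphs coming from the extremal cases $C=1$ and $C=-1$ of Proposition \ref{HiperC}. Fix the geodesic $\gamma$ with endpoints $q_1,q_2$, let $s$ be the signed distance with $\brg\Omega^+=\widehat{q_1,q_2}$, and set $R=\tanh^{-1}(2H)$. For $\sigma_1$ I would take $C=1$: the domain $\Omega_1=\{s>R\}$ bounded by the equidistant curve $\beta_1=\{s=R\}$, and the graph $\Sigma^+=\{(x,\phi_1(s(x)))\mid x\in\Omega_1\}$. For $\sigma_2$ I would take $C=-1$: the domain $\Omega_{-1}=\{s>-R\}$, boundary $\beta_{-1}=\{s=-R\}$, and $\Sigma^+=\{(x,\phi_{-1}(s(x)))\}$. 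In both cases Proposition \ref{HiperC} provides a smooth CMC-$H$ graph; since $\phi_1(s)\to-\infty$ and $\phi_{-1}(s)\to+\infty$ as $s\to M_{1}^+=R^+$ and $s\to M_{-1}^+=-R^+$ respectively, the graph escapes every compact set as $x\to\beta_{\pm1}$, hence it is a properly embedded surface. Crucially, no Alexandrov reflection is performed; this is exactly what breaks the up--down symmetry and yields the one-sided configurations.

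Next I would identify the three pieces of $\brg\Sigma^+$. First, over the interior of the arc $\widehat{q_1,q_2}$ one has $s\to+\infty$, and by the last line of Proposition \ref{HiperC} the slope tends to $l_H$; exactly as in Propositions \ref{p1} and \ref{barreirastall} this gives $\widehat{q_1,q_2}\times\{l_H\}\subset\brg\Sigma^+$. Second --- and this is the genuinely new point --- the curve $\beta_{\pm1}$ lies at \emph{finite} distance from $\gamma$, yet the height blows up there: as $x$ approaches the interior of $\beta_1$ (resp.\ $\beta_{-1}$) the $\hh$-coordinate stays bounded while $\phi_1\to-\infty$ (resp.\ $\phi_{-1}\to+\infty$). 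Hence any such sequence converges in $\overline{\hhr}^g$ to the pole $p^-$ (resp.\ $p^+$), so the entire finite boundary curve collapses to a single ideal point.

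The core of the argument is the analysis at the two ideal corners $q_1,q_2$, where $\beta_{\pm1}$ and the arc both accumulate; here I would run the cone comparison of Proposition \ref{barreirastall}. Choose $o\in\gamma$ and cones $K_m$ of slope $m$ with apex $o$. For $C=1$ the profile $\phi_1$ increases from $-\infty$ (at $\beta_1$) to $+\infty$ (slope $l_H$ deep in the arc), so along a level curve the height is bounded, deep in the arc the slope is $l_H$, and near $\beta_1$ the height diverges to $-\infty$; thus for every $m\in(-\infty,l_H)$ the surface lies above $K_m$ in one region and below it in another near $q_i$, and connectedness forces a crossing projecting into any neighborhood of $q_i$, giving $W_{[-\infty,l_H]}(q_i)\subset\brg\Sigma^+$. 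For $C=-1$ the profile has an interior minimum (so the height is bounded below, which forbids negative slopes), equals $l_H$ deep in the arc, and diverges to $+\infty$ along $\beta_{-1}$; the same crossing argument now produces exactly $W_{[0,+\infty]}(q_i)\subset\brg\Sigma^+$, the pole $p^+$ being the slope-$+\infty$ end. The matching upper/lower bounds (no slope $>l_H$ for $C=1$, none $<0$ for $C=-1$) follow since the surface lies entirely on one side of $K_m$ near $q_i$ for $m$ outside these ranges, exactly as in the last paragraph of the proof of Proposition \ref{barreirastall}.

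Assembling the pieces yields $\brg\Sigma^+=\widehat{q_1,q_2}\times\{l_H\}\cup W_{[-\infty,l_H]}(q_1)\cup W_{[-\infty,l_H]}(q_2)=\sigma_1$ in the first case, and $\brg\Sigma^+=\widehat{q_1,q_2}\times\{l_H\}\cup W_{[0,+\infty]}(q_1)\cup W_{[0,+\infty]}(q_2)=\sigma_2$ in the second, the pole being absorbed into both Weyl chambers. I expect the delicate step to be the corner analysis: one must check that the competition between the (logarithmic) blow-up of $\phi_{\pm1}$ near $\beta_{\pm1}$ and the linear growth $m\,d(o,x)$ of the cone as $x\to q_i$ realizes every intermediate slope and no others. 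I would also record the topological contrast: $\sigma_1$ is the image of the bounding circle of the disc $\Omega_1$ (the arc meets each chamber at its \emph{endpoint} $(q_i,l_H)$) and is a genuine Jordan curve, whereas for $\sigma_2$ the interior of $\Sigma^+$ contributes the slopes in $[0,l_H]$ while $\beta_{-1}$ collapses to $p^+$, so the arc meets each chamber in its interior at $(q_i,l_H)$, creating branch points; this is precisely why the statement only asserts that $\sigma_2$ is the geodesic boundary of a properly embedded CMC-$H$ surface rather than that it is $H$-fillable.
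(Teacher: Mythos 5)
Your proposal is correct and follows exactly the route the paper takes: its entire proof is the one sentence ``reproduce the proof of Proposition \ref{barreirastall} replacing $C>1$ by $C=1$ and $C=-1$,'' which is precisely your construction, and your write-up supplies the details (the collapse of $\beta_{\pm 1}$ to the poles, the one-sided slope bounds at the corners) that the paper leaves implicit. No discrepancies to report.
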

	
	\begin{proof}
		
		The proof follows reproducing the proof of Proposition \ref{barreirastall} replacing $C>1$ by $C=1$ and $C=-1$, respectively.
		
	\end{proof}
	
	\begin{figure}
		\centering
		\includegraphics[width=0.4\linewidth]{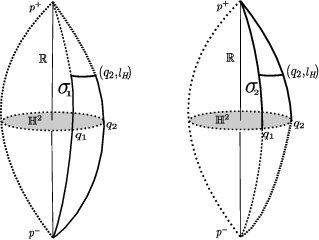}
		\caption{Proposition \ref{ExemplosNovos}}
		\label{fig:exnovos}
	\end{figure}

	\begin{remark}
	  Other invariant surfaces are the invariant surfaces under parabolic isometries as mentioned in section \ref{sec:invariant_surfaces}, it is possible to make a similar construction to the invariant surfaces under parabolic and hyperbolic isometries, but in this case the geodesic boundary of the surfaces is not an H-fillable curve, where the geodesic boundary are two circles of height $l_H$ and $-l_H$ respectively and an interval of a Weyl chamber.
	\end{remark}

	\bigskip
	
	\bigskip

	\section{$H-$fillable curves classification}

	The previous results allow us to have intuition that the geodesic boundary of the surface in $\hhr$ with constant mean curvature  $ H \in [0,1 / 2] $ is determined by $H$, increasing from the equator when $H=0$ to the poles (as $H\to 1/2$). The following results shows precisely which curvature $H$ gives the parameters to classify the properly embedded CMC$-H$ surfaces in $\hhr$. 
	
	\begin{theorem}\label{teo1}
	    Let $\Sigma$ be a properly embedded surface with constant mean curvature $0\le H< 1/2$ such that $\brg \Sigma$ intersects $W(q)$ transversally at $(q,m)$. Then $m=\pm l_H.$
	\end{theorem}

	\begin{proof}
		The proof follows the some ideas of the proof of Theorem 5.1 in \cite{KLM}. Let $m>0$ be such that $\brg \Sigma$ is transversal to $W(q)$ at $(q,m)$ and by contradiction we first suppose that  $m>l_H$.
		
		Then there exists $l$ such that $l_H<l<m$. By transversality there exists an open neighbourhood $\mathcal{U}$ of $(q,l)$ such that $\mathcal{U}\cap\Sigma = \emptyset$. Also let us denote by $K_l^+$ the connected component of $\hhr$ that is above the cone $K_l$ with slope $l$, centred at a previously selected point $o\in\hh$. Denote by $\Sigma'$ be the intersection $\Sigma \cap K_l^+$, that is, the portion of $\Sigma$ that is above the cone.

		Let $c\in \brg \hh$ be an arc containing $q$ as interior point and such that $c\times\{l\}\subset \mathcal{U}$. Let $\gamma\in \hh$ be a geodesic and $\Omega^+\subset \hh$ be the semi space determined by $\gamma$ such that $\brg\Omega^+=c$. Let $s:\Omega^+\to [0,+\infty)$ be the distance to $\gamma$ as in Section \ref{sec:invariant_surfaces}. Choose any $r>R$ in Proposition \ref{HiperC} and consider the surface $\Hhund_r$. By the choice of $\mathcal{U}$ we know that $\brg\Sigma'\cap\brg\Hhund_r = \emptyset$. 
		
		For all $t\in \mathbb{R}$, let $$S_t:=\{(x,\Hhund_r(s(x))+t),\,x\in \Omega_r\},$$ that is, the vertical translation of $\Hhund$ by $t$. Also by the choice of $\mathcal{U}$ there exists $t_0\in \mathbb{R}$ such that $S_{t_0}\cap \Sigma'=\emptyset$. On the other hand, notice that for sufficiently large $t$ it holds that $S_t\cap \Sigma'\neq\emptyset$. It follows that $\exists t^*>t_0$ such that $S_{t^*}\cap \Sigma'\neq \emptyset$ but $S_t \cap \Sigma'=\emptyset,\; \forall t<t^*$. This must produce a tangent point. It does not occur at the interior boundary of $\Sigma'$ by the choice of $\mathcal{U}$, contradicting the Tangency Principle.

		
		\begin{figure}[h]
			\centering
			\includegraphics[width=0.25\linewidth]{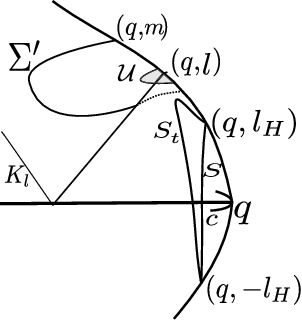}
			\caption{$l_H<m$ of theorem 4.1}
			\label{fig:imagen5}
		\end{figure}
		
		\

	In the other case, let us suppose by contradiction that  $0< m < l_H$. Then there is $l$ such that $m<l<l_H$, thus  
	there exists an open neighborhood $\mathcal{U}$ of $(q,l)$ such that $\mathcal{U}\cap \Sigma = \emptyset$.
	Let $\Sigma'$ be the portion of $\Sigma$ that contains $(q,m)$ in its geodesic boundary and such that $\Sigma'$ is below $K_l$ ( $\Sigma'=K_l^-\cap\Sigma$). Define $\mathcal{V}\subset \hh$ as the projection of $\mathcal{U}$ over $\hh$, that is an open neighborhood of $q\in \brg \hh$. Choose any point $a\in \mathcal{U}$ and consider $S_0=\Hscap_a$ a spherical cap centered at $a$. As in the previous cases, the family of vertical translations $\{S_t\}_t\in \mathbb{R}$ of $S_0$ must have a last contact point with $\Sigma'$, getting a contradiction.

    Finally, the case when $\brg\Sigma$ intersects transverselly $W(q)$ in $(q,0)$ ($m=0$), in this case $\Sigma$, by Lemma \ref{LemaOrienta}, can be oriented either upward or downward in a neighborhood of $(q,0)$, if $\Sigma$ is oriented upward we consider barriers $\Hscap$ and use a similar process to $l>0$. Also for $\Sigma$ oriented downward, taking the barrier $-\Hscap$.	\end{proof}

		\begin{figure}[h]
			\centering
			\includegraphics[width=0.3\linewidth]{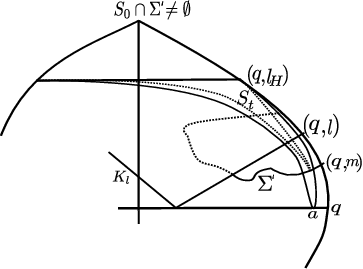}
			\caption{$0<m<l_H$ of theorem 4.1}
			\label{fig:imagen06}
		\end{figure}

	In this text the geodesic boundary studied are $H-$fillable curves and according to the theorem \ref{teo1} the $H-$fillable curves run through by a fixed Weyl Chamber and can only change to another Weyl Chamber over the sets $\brg\HH\times\{-l_H,l_H\}$ or at the poles. In other cases we could have surfaces with geodesic boundary more complex, for example the surface presented by A. Folha H. and Rosenberg in \cite{FR}, where the geodesic boundary is the whole $\brg\HH\times\rr$. Also $\cite{KLM}$ in the Theorem 5.7 presents an example of a minimal surface with geodesic boundary being a strip of $\brg(\hhr)$. In our case, the following Corollary presents all possible $H-$fillable curves in $\brg(\hhr)$.

	\begin{corollary}\label{cor1}

		For given $0<H<1/2$, any $H-$fillable curve in $\brg(\hhr)$ is one of the following:
		\begin{enumerate}[(i)]  
			
			\item The curves $\brg\HH\times\{l_H\}$ or $\brg\HH\times\{-l_H\}$.
			
			\item The union of arcs with height
			$ l_H $ (or $ -l_H $) with Weyls Chambers intervals from $ l_H $ (or $-l_H$) to $ p^+ $ (or $ p^-$).
			
			\item The union of arcs with height $l_H$ and $-l_H$ with Weyl Chambers intervals from $ -l_H $ to $ l_H $.
			
			\item The union of arcs with height $ l_H $ and $ -l_H $ with Weyl Chambers intervals from $-l_H$ to $l_H$, $l_H$(or $-l_H$) to $p^+$ (or $p^-$), the arcs can be reduced by one point.
			
			\item The union of arcs with height $ l_H $ and $ -l_H $ with Weyl Chambers intervals from $p^-$ to $-l_H$, $-l_H$ to $l_H$, $l_H$ to $p^+$, the arcs can be reduced by one point.
			
			\item Finally the union of curves type i, ii, iii, iv when possible.
		\end{enumerate}
		
	\end{corollary}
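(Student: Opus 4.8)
The plan is to prove Corollary \ref{cor1} as a direct consequence of the structural Theorem \ref{teo1}, which constrains how the geodesic boundary of any properly embedded CMC-$H$ surface can meet each Weyl Chamber, together with the existence results (Propositions \ref{p1}, \ref{barreirastall}, \ref{ExemplosNovos}, and \ref{p2}) that realize each candidate curve as an actual $H$-fillable boundary. First I would establish the \emph{necessity} direction: given any $H$-fillable curve $\sigma = \brg\Sigma$, I apply Theorem \ref{teo1} to each $q\in\brg\HH$. By part (i), the intersection $\brg\Sigma\cap W_{[l_H,+\infty]}(q)$ is empty, the pole $p^+$, or a closed interval containing $(q,l_H)$; symmetrically for $-l_H$. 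By part (ii), $\brg\Sigma\cap W_{[0,l_H]}(q)$ (and its $-l_H$ analogue) is empty or a closed interval containing $(q,l_H)$. Splicing these four pieces together along a single Weyl Chamber $W(q)$, the only possibilities for $\brg\Sigma\cap W(q)$ are: empty; a single point $(q,\pm l_H)$; an interval running between $-l_H$ and $l_H$; an interval running from $\pm l_H$ out to the corresponding pole; or a full interval from $p^-$ to $p^+$.

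\medskip

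Next I would upgrade this pointwise (in $q$) constraint to a global statement about $\sigma$ as a curve. The key observation is that $\brg\Sigma$ is a closed subset of $\brg\hhr$ that is either a Jordan curve or a disjoint union of such, so it cannot have loose endpoints: any arc must close up. Because Theorem \ref{teo1} forces the boundary to always contain the ``turning heights'' $(q,l_H)$ or $(q,-l_H)$ whenever it meets $W(q)$ nontrivially, a connected $H$-fillable curve can only \emph{travel horizontally} along the circles $\brg\HH\times\{\pm l_H\}$ and can only \emph{turn vertically} into Weyl Chambers at those two heights (or pass through the poles). I would make this precise by defining, for the set $A^+ := \{q : (q,l_H)\in\brg\Sigma\}$ and $A^- := \{q : (q,-l_H)\in\brg\Sigma\}$, which arcs of the equator are covered, and noting that the vertical portions occur exactly over the relative boundary points of these arc-sets. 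A careful case analysis on whether an arc is complete ($A^\pm = \brg\HH$), a proper arc, or degenerate to a point then yields precisely the list (i)--(vi). The degenerate cases (``arcs reduced to one point'') in (iv) and (v) correspond to Weyl Chamber intervals meeting a single equatorial point, which is exactly the boundary type realized in Proposition \ref{p2}.

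\medskip

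For the \emph{sufficiency} direction I would exhibit, for each item in the list, a properly embedded CMC-$H$ surface realizing it, drawing directly on the already-constructed examples: item (i) is realized by the caps $\Hscap$ and their reflections (Proposition \ref{p1}); item (iii) by the hyperunduloids $\Hhund$ with $C>1$ (Proposition \ref{barreirastall}); items (ii), (iv), (v) involving intervals that run out to a pole are realized by the $C=\pm 1$ surfaces $\sigma_1,\sigma_2$ of Proposition \ref{ExemplosNovos} and the horonduloid boundary of Proposition \ref{p2}; and item (vi), disjoint unions, follows because one can place the individual realizing surfaces over disjoint regions of $\HH$ so that they remain embedded and non-interfering, their geodesic boundaries being disjoint by construction.

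\medskip

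The main obstacle I expect is the global connectivity/closure argument in the second paragraph: Theorem \ref{teo1} is purely \emph{local in $q$}, so the genuine work is in arguing that these per-Chamber pieces can only assemble into the listed global configurations and no exotic ones. In particular I must rule out a boundary curve that ``turns'' at an intermediate height $m$ with $0<m<l_H$, which is precisely what part (ii) forbids (any nonempty intersection with $W_{[0,l_H]}(q)$ is an interval \emph{containing} the endpoint $(q,l_H)$, so it cannot terminate strictly below $l_H$), and to verify that the arc-sets $A^\pm$ are themselves closed arcs (or unions of arcs) rather than more pathological closed subsets of the circle. This uses that $\brg\Sigma$ is a Jordan curve (or disjoint union), hence locally homeomorphic to an interval, so its trace on $\brg\HH\times\{l_H\}$ cannot be, say, a Cantor set; combining the topological constraint on $\sigma$ with the metric constraint from Theorem \ref{teo1} is where the argument must be made airtight.
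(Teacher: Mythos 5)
Your proposal follows essentially the same route as the paper: derive from Theorem \ref{teo1} that $\sigma\cap W(q)$ is empty or one of the admissible closed sets containing $(q,\pm l_H)$ (or a pole), then use the fact that $\sigma$ is a Jordan curve (or disjoint union of such) to conclude it can only run along $\brg\HH\times\{\pm l_H\}$ and turn into Weyl Chambers at those heights or at the poles, which enumerates the list. The paper states only this necessity direction (and in fact glosses over the global assembly step you rightly flag as the delicate point), while it delegates your ``sufficiency'' half to the Example following the corollary rather than including it in the proof; that addition is harmless but not required by the statement.
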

	
	\begin{proof}
		
		Note that from Theorem \ref{teo1}, if $\sigma$ is $H-$fillable we get that the intersection of $\sigma $ with the Weyl Chambers $W(q)$ is or empty or some closed set $W_{[l_H,+\infty]}(q)$, $W_{[-\infty,-l_H]}(q) $, $W_{[-l_H,l_H]}(q)$,$\{(q,-l_H)\}$ and $\{(q,l_H)\}$. This means that $\sigma$ can only go through Weyl Chambers and change from one Weyl Chamber to another in the sets $\brg\HH\times\{l_H\}$, $\brg\HH\times\{-l_H\}$, $\{p^+\}$ or $\{p^-\}$. Under these conditions and doing all the possibilities we get each of the curves to corollary. See Figure \ref{fig:imagen7}.
	\end{proof}
	\begin{figure}[h]
		\centering
		\includegraphics[width=0.42\linewidth]{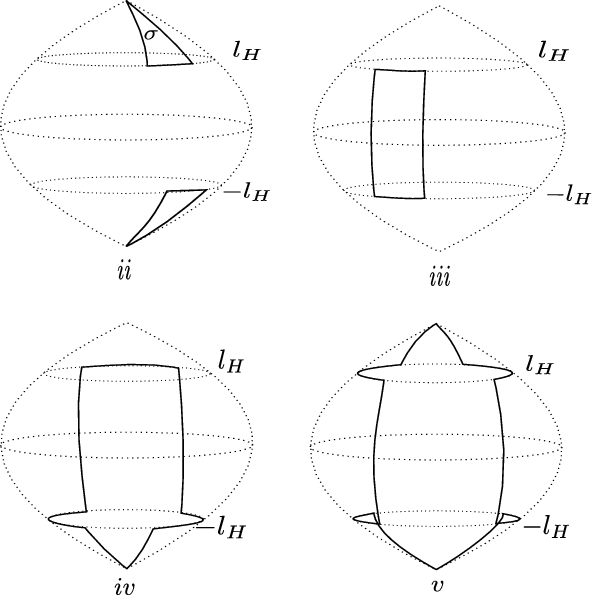}
		\caption{Corollary \ref{cor1}}
		\label{fig:imagen7}
	\end{figure}
	
	\begin{example}
		As  examples of classification curves we have the proposition \ref{p1} curves which are the type (i)  also the proposition \ref{barreirastall} is an examples of type (iii). In proposition \ref{ExemplosNovos} we have we have a curve that is an example of type (iv). Finally the geodesic boundary of the surface $\beta_{R_H}\times\rr$  are curves of type (v).
		
	\end{example}

	\begin{corollary}\label{cor2}
		There is no  properly embedded surface $\Sigma$ into $\hhr$ with constant normal vector field continuous up to $\brg\hhr$ and mean curvature $0<H\leqslant1/2$ such that $\brg\Sigma\subseteq\brg\HH$.
		
	\end{corollary}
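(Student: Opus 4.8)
The plan is to deduce the statement from Theorem \ref{teo1}, reducing it to the nonexistence of closed constant mean curvature surfaces. Recall that the equator $\brg\HH$ is the circle of points of slope $0$ in $\brg(\hhr)$, i.e. the points $(q,0)$ with $q\in\brg\HH$. Suppose, for a contradiction, that $\Sigma$ is a properly embedded surface with constant mean curvature $H$, $0<H\le 1/2$, and $\brg\Sigma\subseteq\brg\HH$. Since $0<H\le 1/2$ we have $l_H=\dern>0$ (with $l_{1/2}=+\infty$), so that $(q,l_H)$ is never a point of the equator; for $H=1/2$ the point $(q,l_H)$ is the pole $p^+$, which is likewise off the equator.

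First I would show that $\brg\Sigma$ is empty. If not, pick $(q,0)\in\brg\Sigma$. As $(q,0)$ is the lower endpoint of the Weyl interval $W_{[0,l_H]}(q)$, the intersection $\brg\Sigma\cap W_{[0,l_H]}(q)$ is nonempty, so by item (ii) of Theorem \ref{teo1} it is a closed interval containing $(q,l_H)$. In particular $(q,l_H)\in\brg\Sigma$, which contradicts $\brg\Sigma\subseteq\brg\HH$ because $l_H>0$. Hence $\brg\Sigma$ contains no equatorial point, and since $\brg\Sigma\subseteq\brg\HH$ this forces $\brg\Sigma=\emptyset$.

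It remains to exclude the possibility $\brg\Sigma=\emptyset$. A properly embedded surface is closed in $\hhr$, so its closure in the compact space $\overline{\hhr}^g$ meets $\brg(\hhr)$ exactly along $\brg\Sigma$; if this is empty, the closure lies inside $\hhr$ and is compact, i.e. $\Sigma$ is a closed surface without boundary. To rule this out for $0<H\le 1/2$ I would use as barriers the complete vertical cylinders $\beta\times\rr$ over equidistant curves $\beta\subset\hh$ of geodesic curvature $2H\le 1$ (a horocylinder when $H=1/2$); these are CMC$-H$ surfaces, as in Remark \ref{equidistante}. Since the projection of $\Sigma$ to $\hh$ is compact, such a cylinder can be placed disjoint from $\Sigma$ and then pushed towards $\Sigma$ by isometries of $\hh$ until a first interior contact point $p$, where $\Sigma$ lies locally on one side of the cylinder.

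The main obstacle is this last step: both surfaces have the same constant mean curvature $H$ at $p$, so the Tangency Principle yields a contradiction only after one checks that their mean curvature vectors agree there; the delicate point is to choose the side from which the cylinder is swept so that its mean curvature vector points away from the sheet of $\Sigma$ through $p$. Granting this, the strong maximum principle makes $\Sigma$ coincide with the cylinder near $p$, hence (by unique continuation) $\Sigma$ would be a nonempty compact open and closed subset of the connected, noncompact cylinder --- impossible. This contradiction proves Corollary \ref{cor2}. Alternatively, one may simply invoke the known fact that $\hhr$ carries no closed CMC$-H$ surface for $H\le 1/2$, the value $H=1/2$ being critical.
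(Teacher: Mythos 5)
Your argument is correct and follows exactly the route the paper intends: Corollary \ref{cor2} is stated without any written proof, and the expected justification is your first step --- by item (ii) of Theorem \ref{teo1}, an equatorial point $(q,0)\in\brg\Sigma$ forces $(q,l_H)\in\brg\Sigma$ with $l_H>0$ (or $p^+$ when $H=1/2$), so $\brg\Sigma$ cannot lie in $\brg\HH$ unless it is empty. Your second step, excluding $\brg\Sigma=\emptyset$ (i.e.\ a compact CMC-$H$ surface with $0<H\le 1/2$), handles a case the paper passes over in silence, and it is sound; moreover the orientation issue you flag as the ``main obstacle'' is not really one, since at the first contact point both alternatives yield a contradiction: if the mean curvature vectors of $\Sigma$ and the cylinder disagree, the comparison inequality fails outright for $H>0$, and if they agree, the tangency principle forces the compact $\Sigma$ to coincide with the noncompact cylinder.
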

	
	\bigskip
	\bigskip
	\bigskip

	\section{Product boundary $\mathcal{B}_H\times\rr$}\label{sec:B_H}

	For the final result we have two important references, the first is the solution of Plateau's asymptotic problem for minimal surface equations, made by B. Nelli and H. Rosenberg in  \cite{NR} 
	
	\begin{theorem}[Theorem 4 of \cite{NR}]\label{theonellirosenberg}
		Let $\Gamma$ be a continuous Jordan curve in $\brg(\HH)\times\rr$ that is a vertical graph. Then there exists a minimal vertical graph on $\HH$ having $\Gamma$ as asymptotic boundary. The graph is unique.
	\end{theorem}
	
	The result above can also be understood as the existence of minimal fillable curves on the product boundary. 
	
	The second important reference is from Jiang, Wang, and Zhu in \cite{JWZ}, where it is made a solution of the  Liouville type problem in $\rr_+^n,\,n\geq2$ for minimal surfaces.
	
	\begin{theorem}[Theorem 1.1 of \cite{JWZ}]
		Let $n\geq2$ be an integer and $u\in C^2 (\rr_+^n)\cap C(\overline{\rr_+^n})$ be a solution of
		
		\begin{equation}
			\begin{array}{ccc}
				
				\dive\left(\frac{\nabla u}{\sqrt{1+|\nabla u|^2}}\right)&=&0\,\,\,in\,\,\,\rr_+^n\\
				u                                                       &=&l\,\,\,on\,\,\,\partial\rr_+^n
			\end{array}
		\end{equation}
		where $l:\rr^n\rightarrow\rr$ is an affine function. Assume that $u:\overline{\rr_+^n}\rightarrow\rr$ has at most a linear growth, which means there exists a constant $K>0$ such that
		
		$$|u(x)|\leq K(|x| + 1)\,\,\,for\,\ any\,\ x\in\overline{\rr_+^n}.$$
		
		Then $u$ is an affine function.
	\end{theorem}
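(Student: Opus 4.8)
The plan is to reduce the whole statement to a single a priori estimate — a global bound on $|\nabla u|$ — and then read off the conclusion from the linearized equation together with a half-space Liouville property. First I would establish that $|\nabla u|\le C$ on all of $\overline{\rr_+^n}$, with $C$ depending only on $n$, $K$ and $l$. Away from the boundary this follows from the interior gradient estimate of Bombieri–De Giorgi–Miranda for the minimal surface equation: at a point $x_0$ with $x_n$ comparable to $|x_0|$ one applies the estimate on a ball of radius $R\sim|x_0|$, and the linear growth hypothesis $|u|\le K(|x|+1)$ makes the quotient $\sup_{B_R(x_0)}(u-u(x_0))/R$ uniformly bounded, so the exponential factor in the estimate is controlled independently of $x_0$. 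Near the flat boundary, where the interior estimate degenerates, I would instead invoke a boundary gradient estimate, which is especially clean here because $\partial\rr_+^n$ is a zero–mean–curvature (totally geodesic) hyperplane and the datum is affine: the hyperplane graph of $l$ and tilted linear functions serve as barriers, and linear growth again renders the resulting bound uniform. This is the technical heart of the argument, and the place where the linear growth hypothesis is genuinely used.

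Once $|\nabla u|\le C$, elliptic regularity (Schauder up to the flat boundary, using that $l$ is smooth) upgrades $u$ to $C^{1,\alpha}(\overline{\rr_+^n})\cap C^\infty(\rr_+^n)$, and the equation becomes uniformly elliptic: writing $W=\sqrt{1+|\nabla u|^2}\le\sqrt{1+C^2}$, its linearization has coefficients $a^{\alpha\beta}=\tfrac1W(\delta^{\alpha\beta}-u_\alpha u_\beta/W^2)$ with eigenvalues trapped between $(1+C^2)^{-3/2}$ and $1$. Differentiating $\dive(\nabla u/W)=0$ in a tangential direction $x_i$ with $1\le i\le n-1$ (rigorously via tangential difference quotients, which are available since the domain and the operator are translation invariant in these directions) shows that each $v_i:=\partial_i u$ is a bounded solution of the divergence-form uniformly elliptic equation $\partial_\alpha(a^{\alpha\beta}\partial_\beta v_i)=0$. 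Since $u=l$ on $\partial\rr_+^n$ with $l$ affine, $v_i$ equals the constant $\partial_i l$ there.

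The next step is a half-space Liouville property: a bounded solution $w$ of a uniformly elliptic divergence-form equation on $\rr_+^n$ that is constant on the flat boundary must equal that constant. I would prove this directly from the boundary Hölder estimate of De Giorgi–Nash–Moser, which gives $|w(x)-w_{\partial}|\le C\,\|w-w_\partial\|_\infty\,(|x|/R)^\alpha$ for $|x|\le R/2$; fixing $x$ and letting $R\to\infty$ forces $w(x)=w_\partial$. Applied to $w=v_i$ this yields $\partial_i u\equiv\partial_i l$ for every tangential $i$, so that $u(x',x_n)=a\cdot x'+f(x_n)$, where $a=(\partial_1 l,\dots,\partial_{n-1}l)$ is constant and $f$ is a function of the single variable $x_n$.

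Finally I would substitute this form back into the equation. Because the tangential part of $\nabla u$ is constant, the tangential divergence drops out and the minimal surface equation collapses to the ordinary differential equation $\bigl(f'/\sqrt{1+|a|^2+f'^2}\,\bigr)'=0$ in $x_n$. Hence $f'/\sqrt{1+|a|^2+f'^2}$ is a constant of modulus strictly less than $1$, which forces $f'$ itself to be constant, so $f$ is affine and therefore $u$ is affine. The only serious obstacle is the first step: making the interior and boundary gradient estimates cooperate into one uniform bound up to the noncompact flat boundary, with constants depending only on the linear growth rate; once that bound is secured the remaining steps are essentially forced.
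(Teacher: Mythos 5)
First, a remark on the comparison itself: the paper does not prove this statement --- it is quoted verbatim from \cite{JWZ} as background motivation, with no proof supplied --- so there is no internal argument to measure yours against, and I can only assess your proposal on its own terms.

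Your architecture (global gradient bound, then uniform ellipticity of the linearized equation for the tangential derivatives, then a half-space Liouville property via the boundary De Giorgi--Nash--Moser estimate, then reduction to an ODE in $x_n$) is a sound Bernstein-type strategy, and steps two through four are correct as written \emph{once} a uniform bound $|\nabla u|\le C$ on all of $\overline{\rr_+^n}$ is granted. The genuine gap sits exactly where you locate the ``technical heart'', and your sketch does not close it. Comparison of $u$ with the tilted hyperplanes $l+\lambda x_n$ on the whole unbounded half-space is not available: the difference of two solutions of the minimal surface equation satisfies a linear equation whose ellipticity degenerates precisely where $|\nabla u|$ is large, so any maximum-principle or Phragm\'en--Lindel\"of argument at infinity is circular, and linear growth does not trap the graph between two such hyperplanes (at a boundary point $y_0$ with $|y_0|=\rho$ large, the hypothesis $|u|\le K(|x|+1)$ still allows $u-l-\lambda x_n$ to be of order $\rho$ at nearby points with small $x_n$). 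If instead you localize and build Serrin-type barriers $l+\psi(x_n)$ on $B_1(y_0)\cap\rr_+^n$, the construction forces $\psi$ to majorize the local oscillation of $u-l$ there, which linear growth only bounds by $C(1+|y_0|)$, so the resulting estimate for $|\nabla u(y_0)|$ blows up as $|y_0|\to\infty$. Moreover, even a uniform bound \emph{on} $\partial\rr_+^n$ would not finish step one: the interior Bombieri--De Giorgi--Miranda estimate fails to be uniform on the whole region $\{0<x_n\le\delta|x|\}$ (e.g.\ on slabs $x_n\sim 1$ far from the origin one is forced to take $R\lesssim 1$ while the numerator $\sup_{B_R}(u-u(x_0))$ may be of order $|x_0|$), and propagating a boundary gradient bound inward by the maximum principle runs into the same unbounded-domain obstruction. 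This uniform global gradient estimate is essentially the entire content of the theorem and is where \cite{JWZ} concentrate their effort, by methods (reduction of the boundary data and boundary regularity of the area-minimizing graph) quite different from pointwise barriers; as it stands, your first step is asserted rather than proved, and the remaining steps, though correct, are downstream of it.
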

	
	In this case the solution on the boundary is an affine function.

	In this article we extend the result of \cite{NR} to CMC$-H$ surfaces in $\hhr$ and we would not have affine solutions on the boundary as in \cite{JWZ}  but continuous functions going to infinity with slope $l_H$.
	
	\bigskip
	\bigskip

	A way of understating these surfaces is the following: the geodesic boundary for minimal graphs are the equator $\brg\HH\times\{0\}$, then to have a better notion of their behavior we may consider the product boundary $\brg\HH\times\rr$. In this product, as mentioned above, graphs of continuous functions defined over $\brg \hh$ are the geodesic boundary of minimal graphs over $\hh$. 
	
	In the other hand, the case of  CMC$-H$ surfaces with $0<H<1/2$, the geodesic boundary explodes as it approaches $\brg\HH$ (Corollary \ref{cor2}), then the product boundary $(\brg \hh) \times \rr$ is useless. An alternative to this problem is to consider another kind of product boundary that occurs at ``height'' $l_H$, where we expect that many different graphs of CMC-$H$ converge to. In order to distinguish them we consider the product defined in the following steps.

	\begin{definition}
		Let $o$ be a fixed origin in $\hh$ and $\Hscap=\Hscap_o$ the $H-$spherical cap centered at $o$. We say that $c(t)$ is asymptotic to $\Hscap$ at height $C$ along the $\gamma$ if $c(t)-\Hscap(\gamma(t))\to C$ for $t\to+\infty$. 
		
	\end{definition}
	
	S. Cartier and L. Hauswirth introduced in \cite{CH} the definition of asymptotic distance from a surface to a hyperboloid $S$ fixed with  geodesic boundary $p^+$ or $p^-$, a definition that in our case would be an asymptotic height for $H=1/2$. 
	\bigskip
	

	\begin{definition}
		Given $0\le H<1/2$ and $\Hscap$ be fixed at a origin $o\in\hh$, denote by $\mathcal{B}_H=\brg\Hscap$ and define the product boundary over $\mathcal{B_H}$ by $\mathcal{B}_H\times\rr$, where the factor $\rr$ represents the asymptotic heights with respect to $\Hscap$.
	\end{definition}

	The previous definition provides more information on the asymptotic behavior of surfaces that are asymptotic at $\Hscap$. Remember that $\overline{\Hscap}^g=\Hscap\cup\mathcal{B}_H\subset\overline{\hhr}^g$ is compact subset, so it is a compact topological space with the subspace topology. Thus, $\overline{\Hscap}\times\overline{\rr}$ is a compact topological space with the product topology and whose product boundary is $\mathcal{B}_H\times\overline{\rr}$. Furthermore, if a $\Sigma$ surface is asymptotic to $\Hscap$, then we will denote its boundary in $\mathcal{B}_H\times\overline{\rr}$ by $\partial_{\times_H}\Sigma$. In the particular case $H=0$, we have $\partial_{\times_H}\Sigma\equiv\partial_{\times}\Sigma$, see Figure \ref{fig:shxr}.

	\begin{figure}[h]
		\centering
		\includegraphics[width=0.33\linewidth]{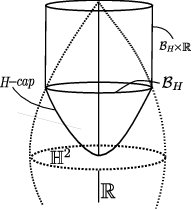}
		\caption{$\brg\Hscap\times\rr$}
		\label{fig:shxr}
	\end{figure}

	A supersolution for $Q_H$(operator defined in \eqref{cmc-equation}) in $\hh$ is function $w^+\in C^{0}(\hh)$ such that, given any bounded domain $\varLambda\subset \hh$, if $u\in C^0(\bar{\varLambda})\cap C^2(\varLambda)$ satisfies $Q_H(u)=0$ in $\varLambda$ and $u|_{ \partial\varLambda}\le w^+|_{\partial\varLambda}$, then $u\le w^+$ in $\varLambda$. Subsolution is defined analogously, replacing ``$\ge$'' by ``$\le$'' on the inequalities above.

	\begin{definition}
		Let $\Hscap$ be fixed at a origin $o\in\HH$. Given $(q,l_H)\in \mathcal{B}_H$ and $\Omega\subset\HH$ such that $q$ is an interior point of
		$\brg\Omega$. A lower barrier (resp. upper) for $Q_H$ relative to $(q,l_H)$ and $\Omega $ with
		asymptotic height $C$ is a function $w^-\in C^0(\HH)$ (resp. $w^+\in C^0(\HH)$) such that
		\begin{enumerate}[(i)]
			\item $w^-$ is subsolution (resp. $w^+$ is a supersolution) of $Q_H=0$.
			\item $w^- -\Hscap\le0$ (resp. $w^+-\Hscap\ge0$) and $\lim_{x\to q}(w^-(x)-\Hscap(x))=0$ (resp.$\lim_{x\to q}(w^+(x)-\Hscap(x))=0$).
			\item $(w^- -\Hscap)_{\HH\backslash\Omega}\le-C$ (resp. $(w^+-\Hscap)_{\HH\backslash\Omega}\ge C$).
		\end{enumerate}
		Here the limit $x\to q$, $x\in\HH$, $q\in\brg\HH$, is in terms of cone topology.
	\end{definition}


\begin{proposition}\label{barinf}
Let $q\in \brg\hh$ be a chosen point and $\Omega$ some neighborhood of $q$ w.r.t. the cone topology, and $C>0$ be a positive constant. There is a lower barrier for $Q_H$ relative to $(q,l_H)$ and $\Omega$ with asymptotic height $-C$.   
\end{proposition}

\begin{proof}
Denote by $\gamma:[0,+\infty)\to \hh$ the geodesic ray such that $\gamma(0)=o$ and $\gamma(+\infty)=q$, that is, $[\gamma]=q$. Furthermore, let $\beta:\mathbb{R}\to \hh$ be a complete geodesic such that $\beta(\mathbb{R})\subset \Omega$ and such that $\beta$ is orthogonal to $\gamma$ on their intersection point $x_0$, that is a point in $\Omega$. Also, denote by $D$ the distance from $o$ to $\beta$, that coincides with the distance from $o$ to $x_0$. 

Let $\beta_R$ be an equidistant curve to $\beta$, at distance $R=\tanh^{-1}(2H)$ (see Remark \ref{equidistante}). Let $\Omega'$ be the connected component of $\hh \setminus \beta_R$, i.e., the connected component that does not contain $\beta$. In particular, $\Omega'\subset \Omega$. Finally, let $v^-:\Omega' \to \mathbb{R}$ a vertical translation of the function given by \eqref{C=1}, that is,

		$$v^-(x)=l_H s + \frac{1}{\sqrt{1-4H^2}}\ln\left|\frac{e^s-e^R}{e^s+e^{-R}}\right|-c,$$ where $s:\Omega'\to \mathbb{R}$ denotes the distance to $\beta$ and $c$ is going to be determined in the next steps. 
		
		
		In Section \ref{sec-hipers} it was proved that $v^-$ has CMC graph that is asymptotic to $\Hscap$ in the interior of $\Omega'$.

		First we will prove that if $c=c(D)$ is correctly chosen, then the asymptotic height of $v^-$ w.r.t. $\Hscap$ is zero at $q$. 
		
		Let us starting rewriting $\Hscap$ as bellow, where $s$ is the distance function to $o$:

		$$\Hscap(x)=l_Hln \left| \frac{1+\sqrt{\frac{1-4H^2\tanh^2(t/2)}{1-4H^2}}}{1-\sqrt{\frac{1-4H^2\tanh^2(t/2)}{1-4H^2}}}\right|, $$ where $t(x)$ is the distance from $o$ to $x$.

		If $x$ is a point over $\gamma$ that also belongs to $\Omega'$, by the orthogonality between $\gamma$ and $\beta$ and by definition of $D$, it holds that $t(x)=s(x)+D$. Therefore 
		
		\begin{equation}
		    \label{v-Hcap}
		    v^-(x)-\Hscap(x) = l_H s + \frac{1}{\sqrt{1-4H^2}} \ln \left|\frac{e^s - e^R}{e^s+e^{-R}}\right|-c-l_H\ln \left|\frac{1+\sqrt{\frac{1-4H^2\tanh(\frac{s+D}{2})}{1-4H^2}}}{1-\sqrt{\frac{1-4H^2\tanh(\frac{s+D}{2})}{1-4H^2}}}\right|
		\end{equation}
		
		Since the second term in \eqref{v-Hcap} goes to zero and $\tanh(\frac{s+D}{2})$ goes to $1$ as $s$ goes to infinity, it holds that the limit of $v^-(x)-\Hscap(x)$ as $x\to q$ along $\gamma$ is given by the following expression:

		$$l_H\lim_{s\to \infty}\left[ s-\ln \left|\frac{1}{2}-\frac{1}{2} \sqrt{\frac{1-\tanh^2(\frac{s+D}{2})}{1-4H^2}}\right| \right],$$
		
		that is, 
		
		$$\lim_{x\to q, x\text{ along }\gamma} (v^-(x) - \Hscap(x)) = l_H \lim_{s\to+\infty} ln \left| \frac{e^s \left(1-\sqrt{\frac{1-4H^2\tanh^2(\frac{s+D}{2})}{1-4H^2}}\right)}{2}\right|-c.$$
		
		We may rewrite the expression above and use L'Hopital's law to compute the limit inside the logarithm as bellow:
		
		$$\lim_{s\to \infty} \left|\frac{e^s \left(1-\sqrt{\frac{1-4H^2\tanh^2(\frac{s+D}{2})}{1-4H^2}}\right)}{2}\right|= \lim_{s\to+\infty} \left|\frac{1-\sqrt{\frac{1-4H^2\tanh^2(\frac{s+D}{2})}{1-4H^2}}}{2e^{-s}}\right|$$
		
		$$=\lim_{s\to +\infty} \left(\frac{1-4H^2\tanh^2(\frac{s+d}{2})}{1-4H^2}\right)^{-1/2}l_H^2\frac{e^s\tanh(\frac{s+D}{2})}{4\cosh^2(\frac{s+D}{2})},$$
		
		and since $\tanh(\frac{s+D}{2}) \to 1$ and $$\lim_{s\to +\infty} \frac{e^s}{\cosh^2(\frac{s+D}{2})}=\lim_{s\to +\infty} \frac{4e^s}{e^{s+D}+e^{-(s+D)}}=4e^{-D},$$ we finally obtain
		
		$$\lim_{x\to q, x\text{ along } \gamma} v^-(x)-\Hscap(x) = l_H (\ln(2 l_H^2)-D)-c,$$
		
		giving us 
		
		$$c(D) = l_H (\ln(l_H^2)-D).$$
		
		Since the functions $v^-$ and $\Hscap$ are continuous w.r.t. cone topology, the computed limit coincides with the limit for $x\to q$, not necessarily along $\gamma$. Therefore, the asymptotic height of $v^-$ w.r.t. $\Hscap$ is $0$ in $q$. Furthermore, continuity of $v^-$ and $\Hscap$ in the interior of $\brg \Omega'$ also implies that all the asymptotic heights of $v^-$ w.r.t. $\Hscap$ are well defined in a neighborhood of $q$.
		
		Next we need to prove that in fact $q$ is a maximum point of $v^{-}-\Hscap$. For this propose, we analyse what happens $\Hscap$ in the level sets of $v^-,$ which are the equidistant curves $\beta_s$, $s> R,$ to $\beta$. To simplify notation, denote by $x_s$ the intersection between $\beta_s$ and $\gamma$.

	\begin{figure}[h]
	\centering
	\includegraphics[width=0.3\linewidth]{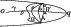}
	\caption{}
	\label{fig:sub}
\end{figure}
		
		If $s>R$ and $x$ is a point in $\beta_s$, the distance from $x$ to $o$ is greater then or equal to the distance from $x_s$ to $o$, with equality if and only if $x=x_s$. Since $\Hscap$ is a strictly increasing function, we have $\Hscap(x) \ge \Hscap(x_s),$ with equality  if and only if $x=x_s$. Furthermore, since $x\in \beta_s,$ it holds that $v^-(x) = v^-(x_s)$. These facts together imply that
		
		$$v^-(x) - \Hscap(x) \le v^-(x_s) - \Hscap(x_s),$$ with equality  if and only if $x=x_s$, what gives us that the maximum of $v^- - \Hscap$ in $\brg \Omega'$ must occur at $q$.  
		
		To finish, since $\lim_{x\in \Omega',x\to \beta_R}v^{-} (x)=-\infty$ and the maximum of $v^-$ at each level set of $\Hscap$ is always attained along $\gamma$, we obtain that given any $C>0$, there exists $\widetilde{\Omega}$ open subset of $\Omega'$ such that 

$$v^- \ge (\Hscap-2C) \text{ in }\widetilde{\Omega}.$$

Define $w^-:\HH\rightarrow\rr$ by

$$w^- = \left \{ \begin{matrix}\Hscap-C &\mbox{if }& x\in\HH\backslash  \widetilde{\Omega}\\ 
	v^- &\mbox{if }& x\in \widetilde{\Omega}.  \end{matrix}\right. $$

Therefore $w^-$ is a lower barrier for $Q_H$ relative to $(q,l_H)$, $\Omega$ and constant $-C$. 

\end{proof}

\begin{figure}[h]
	\centering
	\includegraphics[width=0.25\linewidth]{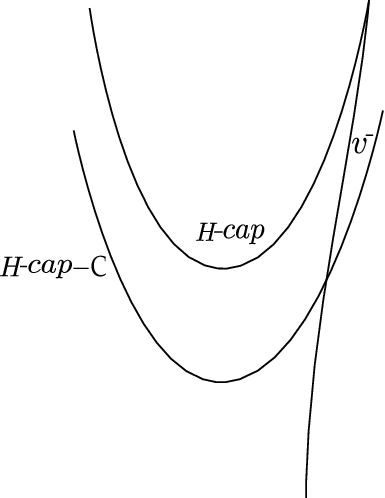}
	\caption{Lower barrier}
	\label{fig:sub}
\end{figure}

\begin{proposition}\label{barsup}
Let $q\in \brg \hh$, $\Omega\subset \overline{\hh}$ open neighborhood of $q$ and $C>0$ be given. Then there exists an upper barrier at $q$ w.r.t. $\Omega$ and $C.$
\end{proposition}

\begin{proof}
Let $\gamma:[0,+\infty)\to \hh$ be the geodesic ray such that $\gamma(0)=o$ and $\gamma(+\infty)=q$. Chose $\beta:\rr\to \hh$ a geodesic such that $\beta(\rr)\subset \Omega$ and $\beta$ is orthogonal to $\gamma$. Without loss of generality, we may assume that $\Omega$ is the connected component of $\hh \setminus \beta(\mathbb{R})$ that contains $q$ on its asymptotic boundary.

We start presenting a supersolution $w:\overline{\Omega}\to \rr$ such that $(w-\Hscap_o)\ge 0$,  $(w-\Hscap_o)|_{\beta} \ge 2C$ and $\lim_{x\to q} (w-\Hscap_o)(x) = 0$.  

{\bf Claim 1.} If $p^*$ is the reflection of $o$ around $\beta$, then $\Hscap_{p^*}(x)=\Hscap_o(x)$ for all $x\in \beta$ and $\Hscap_{p^*}(x)\ge \Hscap_o(x)$ in $\hh\setminus \Omega$.  This is a consequence of hyperbolic trigonometry: since $p^*$ is the reflection of $o$ w.r.t. $\beta$, then for all $x\in \beta$ it holds that $d(x,p^*)=d(x,o)$; also for all $x\in \hh\setminus \Omega$ it holds that $d(p^*,x)>d(o,x).$

{\bf Claim 2.} If $p\in \gamma$ is such that $d(p,o)>d(p^*,o)$, then $(\Hscap_p-\Hscap_o)>0$ along $\beta$. This also is a consequence of hyperbolic trigonometry. Denote by $x^*$ the intersection between $\gamma$ and $\beta$. Therefore for any $x\in \beta$, both $\Delta x^*p^*x$ and $\Delta x^* p x$ are triangles that are rectangle in $x^*$; since $d(x^*,p)>d(x^*,p^*)$ and both triangles share the side joining $x^*$ to $x$, it holds that $d(x,p)>d(x,p^*)=d(x,o)$. 

{\bf Claim 3.} There exists $p \in \gamma$ sufficiently away from $p^*$ such that $$\lim_{x\to q} (\Hscap_p-\Hscap_o)(x) = -2C.$$ 
Indeed: if $p=\gamma(D)$, then a straightforward computation analogous to the ones done in the previous proposition implies that $$\lim_{x\to q}(\Hscap_p-\Hscap_o)(x) = - l_H D.$$ Chose $D= 2C/l_H.$

{\bf Claim 4.} Let $p=\gamma(D)$ as in Claim 3. Then the function $$v^+:=\Hscap_p+2C$$ is the desired barrier. Indeed: by Claims 1 to 3, we have that $(v^+-\Hscap_o)\ge 2C$ in $\hh\setminus \Omega$ and $\lim_{x\to q}(v^+-\Hscap_o)(x) = 0$. We now must prove that $(v^+-\Hscap_o)\ge 0$ in $\Omega$. For this propose, we will prove that for any circle $\mathcal{C}$ centered at $o$, the minimum of $(v^+-\Hscap_o)|_{\mathcal{C}}$ occurs at $\mathcal{C}\cap \gamma$.

If $\{x\}=\mathcal{C}\cap\gamma$, then the distance from $x$ to $p$ is the shortest distance from $\mathcal{C}$ to $p$, since $\gamma$ is orthogonal to $\mathcal{C}$. As $\Hscap_p(s)$ is increasing, we have, for any $y\in \mathcal{C}$, $ \Hscap_p(x)\le\Hscap_p(y)$, therefore $\Hscap_p(x)-\Hscap_o(x)\le \Hscap_p(y)-\Hscap_o(y)\forall y\in\mathcal{C}$.

This proves that $q$ is the minimum of $v^+-\Hscap_o$.

To finish, let us define $w^+:\HH\rightarrow\rr$ by
	
\begin{equation}\label{BarrSup}
		w^+:=\min\{v^+,C\}
\end{equation}
	
	If follows that $w^+$ is a supersolution of $Q_H=0$ in $\hh$ that is is an upper barrier at $(q,l_H)$ w.r.t. $\Omega$ and $C$.

\end{proof}

\begin{figure}[h]
	\centering
	\includegraphics[width=0.4\linewidth]{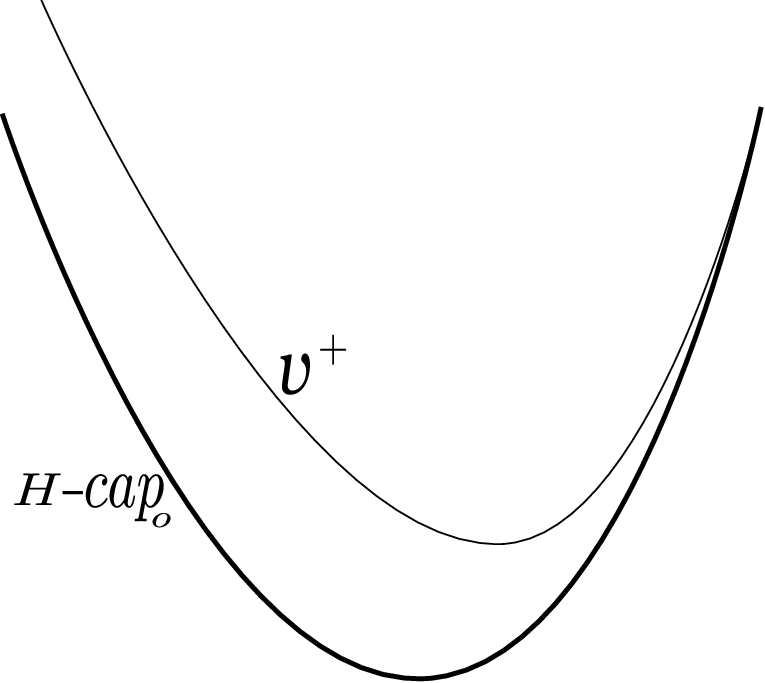}
	\caption{Upper barrier}
	\label{fig:sup}
\end{figure}



Finally, we present the last theorem, that generalises Theorem
\ref{theonellirosenberg} for any $0\le H<1/2$:

\begin{theorem}\label{S_H}
        Let $0\le H<1/2 $ and $\phi\in C^0(\mathcal{B}_H)$. There is a
unique $u\in C^{\infty}(\HH)$ such that $Q_H(u)=0$ and $u$ continuously
extends to $\mathcal{B}_H$ and its  extension $\overline{u}$ satisfies
$\overline{u}|_{\mathcal{B}_H}=\phi$, in the sense of asymptotic heights.
In other words, $$\lim_{x\to p} (u-\Hscap)(x)=\phi(p)$$ for all $p\in
\brg\hh$.
\end{theorem}   

\begin {proof}
We use the Perron method for CMC graph equation, describing lower and
upper barriers at infinity. 

First define
$$S^{\phi}=\{v\in C^0(\bar{\HH})|v\,\text {is a supersolution
of}\,Q_H\,\text{such that}\,(v-\Hscap_o)|_{\mathcal{B}_H}>\phi\}.$$

The set $S^\phi$ is not empty because $w^+ + \max \phi,$ where $w^+$ is
given in the Proposition \ref{barsup}, is a supersolution. It follows
from Perron's Method that the function $u:\hh\to\rr$ given by
$$u(x):=\inf_{v\in S^\phi}v(x),\,x\in{\HH}$$
 is a solution of the CMC graph equation, that is, $u\in
C^{\infty}(\HH)$ with $Q_H(u)=0$. To get our result, it then suffices
to prove that $u$ extends continuously to $\mathcal{B}_H$ and its
extension $\bar{u}$ satisfies $\bar{u}|_{\mathcal{B}_H}=\phi$.

Since asymptotic heights are obtained subtracting $\Hscap_o$,
Comparison Principle is also valid in our context. Precisely, if $v,w$
are global continuous sub and supersolutions for $Q_H$, respectively,
such that the asymptotic heights of $v$ w.r.t. $\Hscap_o$ are less then
or equal to the asymptotic heights of $w$, then $v\le w$ in $\hh$.

If $q\in \brg \hh$ and $\varepsilon>0$, the continuity of $\phi$
implies that there exists an open neighborhood $W$ of $q$ in $\brg \hh$
such that $|\phi(y)-\phi(q)|<\varepsilon/2$ at $W$. Let $\Omega\subset
\hh$ be an open neighborhood of $q$ such that $\brg \Omega$ is
contained in the interior of $W$. Consider $w^+$ an upper barrier at
$q$ with respect to $\Omega$ and with asymptotic height $C=\max |\phi|$
and define $u^+:\hh \to \mathbb{R}$ by $$u^+(x)=\phi(q) + w^+(x) +
\varepsilon.$$
Then $u^+\in S^{\phi}$ and therefore $u\le u^+$, which implies that
$\limsup_{x\to q}(u-\Hscap_o)(x)\le \phi(q)+\varepsilon.$

Furthermore, defining $u^-:\hh\to \rr$ by $$u^-(x) = \phi(q) +w^-(x)-
\varepsilon,$$ where $w^-$ is given by Proposition \ref{barinf}, we
again obtain by the Comparison Principle that $u^-\le v$ for all $v\in
S^\phi,$ which gives us that $\liminf_{x\to q}(u-\Hscap_o)(x) \ge
\phi(q)-\varepsilon.$

Therefore $u$ continuously extends up to $\mathcal{B}_H$ and its
extension $\bar{u}$ satisfies $\bar{u}|_{\mathcal{B}_H} = \phi$.

\end{proof}


\begin{thebibliography}{99}                                                                                              


\bibitem {CH}Cartier, S. ; Hauswirth, L. - \emph{Deformations of constant mean curvature $1/2$ surfaces in $\hhr$ with vertical ends at infinity}. Communications in Analysis and Geometry, \textbf{22} No 1, (2014) pp. 109–148.

\bibitem {CS} Citti, G. and Senni, C. - \emph{Constant mean curvature graphs on exterior domains of the hyperbolic plane}. Mathematische Zeitschrift, \textbf{272},
(2011) pp. 531–550.

\bibitem {CZ} Coskunuzer, B. - \emph{ Minimal surfaces with arbitrary topology in $\hhr$}. Algebraic and Geometric Topology, \textbf{21}, (2021) pp. 3123–3151.

\bibitem{EO} Eberlein, P. and O'Neill, B. \emph{Visibility manifolds}. Pacific J. Math., \textbf{46} No 1  (1973).

\bibitem {MBR}Elbert, M.; Nelli, B. e Sa Earp, R. - \emph{Exsitence of	Vertical Ends of Mean Curvature $\frac{1}{2}$ in $\mathbb{H}^{2}\times\mathbb{R}$}, Trans. Amer. Mat. Soc. \textbf{364} (2012) pp. 1179-1191.



\bibitem {FR}Folha, A. ; Rosenberg, R. - \emph{Entire constant mean curvature graphs in $\mathbb{H}^{2}\times\mathbb{R}$ }, Pacific J. Math., \textbf{316} No 2 (2022) PP. 307–333.



\bibitem {JS} Jenkins, Howard. ; Serrin, James. - \emph{The Dirichlet problem for the minimal surface equation, with infinite data}, Amer. Math. Soc. \textbf{72}, (1966) pp. 102-106 

\bibitem {JWZ} Jiang, G. ; Wang, Z. ; Zhu, J. - \emph{Liouville type theorems for minimal surface equations in half space}, Journal of Differential Equations. \textbf{305}, (2021) pp. 270-287. 

\bibitem {KST}Klaser, P. ; Soares, R. ; Telichevesky, M. - \emph{Constant mean curvature surfaces in $\mathbb{H}^{2}\times\mathbb{R}$ with boundary in two parallel planes}, (2015) arXiv:1512.07637.

\bibitem {KL}Kloeckner, B. - \emph{  Symmetric spaces of higher rank do not admit differentiable compactifications}, Math. Ann. \textbf{347}, No 4, (2010)  pp. 951–961

\bibitem {KLM}Kloeckner, B. ; Mazzeo, R. - \emph{  On the asymptotic behavior of minimal surfaces in $\mathbb{H}^{2}\times\mathbb{R}$}, Indiana University Mathematics Journal, \textbf{66} No 2, (2017) pp. 195-226

\bibitem{NR}Nelli, B. ; Rosenberg, H. - \emph{Minimal surfaces in $\hhr$}, Bull. Braz.
Math. Soc., \textbf{44} No 2 (2002), pp. 263-292.

\bibitem {BRWE}Nelli, B. ; Sa Earp, R. ; Santos, W. e Toubiana, E. -
\emph{Uniqueness of $H$ - Surfaces in $\mathbb{H}^{2} \times
	\mathbb{R}$, $\rvert H \rvert\leq\frac{1}{2}$, with Boundary One or Two
	Parallel Horizontal Circles}, Annals of Global Analysis and Geometry,
\textbf{233} No 4 (2008)  pp. 307-321.



\bibitem {STM}Sa Earp, R. ; Toubiana, E. - \emph{Screw Motion Surfaces in
	$\mathbb{H}^{2} \times\mathbb{R}$ e $\mathbb{S}^{2} \times\mathbb{R}$},
Illinois Jour. of Math. \textbf{49} No 4 (2005) pp. 1323-1362.

\bibitem {SET}Sa Earp, R. ; Toubiana, E. - \emph{An Asymptotic Theorem for
	Minimal Surfaces and Existence Results for Minimal Graphs in $\mathbb{H}^{2}
	\times\mathbb{R}$}, Mathematische Annalen \textbf{342}, (2008) pp.309-331.

\bibitem {STP}Sa Earp, R. - \emph{Parabolic and Hiperbolic Screw Motion
	Surfaces in $\mathbb{H}^{2} \times\mathbb{R}$}, Journal of the Australian
Math. Society, \textbf{85} No 1 (2008) pp. 113-143.



\end{thebibliography}
\end{document}